\newtheorem{theorem}{Theorem}[section]
\newtheorem{lemma}[theorem]{Lemma}
\newtheorem{corollary}[theorem]{Corollary}
\newtheorem{definition}[theorem]{Definition}
\newtheorem{remark}[theorem]{Remark}
\theoremstyle{nonumberbreak}
\newtheorem{proof}{Proof}
\renewcommand{\(}{\left(}
\renewcommand{\)}{\right)}
\newcommand{\<}{\left<}
\renewcommand{\>}{\right>}
\newcommand{\IN}          {\mathds{N}}                                        
\newcommand{\SpX}         {\mathcal{X}}
\newcommand{\SpY}         {\mathcal{Y}}
\newcommand{\VarX}        {x}                                                 
\newcommand{\VarY}        {y}                                                 
\newcommand{\Err}         {\xi}
\newcommand{\Op}          {A}                                                 
\newcommand{\BasX}        {u}                                                 
\newcommand{\BasY}        {v}                                                 
\newcommand{\Fsing}       {\mathfrak{s}}                                            
\newcommand{\E}           {\mathbb{E}}
\newcommand{\PP}          {\mathbb{P}}
\newcommand{\argmin}      {\operatorname*{argmin}}
\begin{document}

\title{Applying Lepskij-Balancing in Practice}
\author{Frank Bauer}

\eads{frank.bauer.de@gmail.com}

\date{This version: \today}

\begin{abstract}
In a stochastic noise setting the Lepskij balancing principle for choosing the regularization parameter in the regularization of inverse problems is depending on a parameter $\tau$ which in the currently known proofs is depending on the unknown noise level of the input data. However, in practice this parameter seems to be obsolete.

We will present an explanation for this behavior by using a stochastic model for noise and initial data. Furthermore, we will prove that a small modification of the algorithm also improves the performance of the method, in  both speed and accuracy.
\end{abstract}

\ams{47A52,65J22,60G99,62H12}

\maketitle

\section{Introduction}

In the following, we will consider linear inverse problems \cite{Engl/Hanke/Neubauer:1996,Hofmann:1986}
given as an operator equation
\begin{equation}\label{main}
 \Op \VarX = \VarY,
\end{equation}
where $\Op :\SpX\to \SpY$ is a linear, continuous,
compact operator acting between separable real infinite dimensional Hilbert spaces
$\SpX,\SpY$. Without loss of generality we assume that $\Op$ has a trivial null-space $N(\Op) = \{ 0 \}$. $\Op$ does not have a continuous inverse because $\Op$
is compact and $\SpX$ is infinite dimensional, and hence
\eqref{main} is ill-posed.

For the analysis we will need the singular value decomposition of $\Op$. There exist orthonormal bases $(u_k)_{k \in \IN}$ of $\SpX$
and  $(v_k)_{k \in \IN}$ of $\SpY$ and a sequence of positive decreasing singular values $(\Fsing_k)_{k \in \IN}$ such that
\begin{equation}
 \Op x = \sum_{k=1}^\infty \Fsing_k \<x,u_k\> v_k.
 \end{equation}
Moreover, we assume that the data $\VarY$ are noisy, the noise model for $\Err$ will be specified later, in contrast to the classical considerations in a stochastic setting $\xi$ is not necessarily an element of $\SpY$.

\begin{equation}
\VarY^\delta = \Op \VarX + \Err ,\quad\text{$\Err$ noise}.
\end{equation}
In order to counter the ill-posedness, we need to regularize; in this article we will concentrate on the regularization method truncated singular value decomposition (TSVD, also called spectral cut-off regularization) which has some specific features that make proofs considerably easier. The level $n$ at which we truncate is called regularization parameter. The subsampling function $s(\cdot) : \IN \mapsto \IN$ is assumed to be strictly increasing.
\begin{equation}
\Op_n^{-1} \VarY^\delta = \VarX_n^\delta = \sum_{k=1}^{s(n)} \(\< \VarX , \BasX_k \> + \Fsing_k^{-1} \< \xi , \BasY_k \> \)  \BasX_k
\end{equation}
The unknown noise-free regularized solution is defined as
 \begin{equation}
\Op_n^{-1} \VarY = \VarX_n = \sum_{k=1}^{s(n)} \< \VarX , \BasX_k \>  \BasX_k
\end{equation}
The correct choice of the regularization parameter is of major importance for the performance of the method.
In recent times, a number of articles \cite{Goldenshluger/Pereverzev:2000,Mathe/Pereverzev:2003,Bauer/Pereverzev:2005,Mathe/Pereverzev:2006,Haemarik/Palm/Raus:2007,Bauer/Hohage/Munk:2009} have considered the Lepskij Balancing principle \cite{Lepskij:1990} for choosing this parameter in various situations.
For practical applications there are still three open issues:
\begin{itemize}
\item In the case of stochastic noise, one loses, in comparison to the optimal situation, a logarithmic factor; i.e. the proven convergence rate of the error is $O(\delta^H\log(\delta))$ in comparison to an optimal $O(\delta^H)$ where $\xi = \delta \overline{\xi}$ with a normalized $\overline{\xi}$,  $H$ is depending on $x$ and $\xi$. This phenomenon cannot be observed in practical implementations; the question is why?
\item In practical implementations, one can replace some knowledge needed explicitly in the proofs (the size of the regularized error in $\SpX$) with a data-driven approximation without losing performance. Can this be put on a firm mathematical basis?
\item Is there a possibility to improve the speed of the method such that it can compete with others, e.g. the Morozov Discrepancy principle \cite{Engl/Hanke/Neubauer:1996,Morozov:1966}?
\end{itemize}
In order to explain some behavior observed using other parameter choice methods, in practical situations an alternative model for describing the solution and the noise has recently been proven successful \cite{Bauer/Reiss:2008,Bauer/Kindermann:2008}. Using this model, we can answer the questions posed above by slightly modifying Lepskij's algorithm such that we can prove an oracle inequality.

The outline of the article is as follows. First we will cite the definition of the Lepskij Balancing principle. Then we will define our model and calculate the underlying expectations on whose basis we will estimate the probabilities that the balancing principle behaves differently than expected. This will yield the desired oracle inequality.

Using the same methodology, we will show that an estimation based on two measurements is sufficient to obtain the same result, of course with weaker constants.

\section{Lepskij Balancing Principle}

The key point in the Lepskij Balancing Principle is the knowledge of the noise behavior, which has different forms for different noise regimes \cite{Goldenshluger/Pereverzev:2000,Mathe/Pereverzev:2003,Bauer/Pereverzev:2005}.
\begin{definition}[Noise Behavior]
If $\xi$ is assumed to be in a deterministic regime (i.e., $\| \xi\|\leq \delta$), then define
\begin{equation}
\varrho(n) := \Fsing_{s(n)}^{-1} \delta \geq \| \Op_n^{-1} \xi \|
\end{equation}
where $\delta$ is the noise level. If $\xi$ is assumed to be stochastic, then define
\begin{equation}
\varrho(n)^2 := \E \| \Op_n^{-1} \xi \|^2.
\end{equation}
Later on we will specify more precisely what we mean by stochastic. In both cases, $\varrho(\cdot)$ is a monotonically increasing function.
\end{definition}
Now we will follow the approach presented in \cite{Bauer/Munk:2007}, which already incorporates the (minor) modifications of the balancing principle to make it fit for practice, in particular, by limiting the number of necessary computations.
\begin{definition}[Special parameters]
There are two special regularization parameters which are important for the later proofs:
\begin{itemize}
\item $n_{opt}$: the optimal regularization parameter, i.e., we have $\|\Op_{n_{opt}}^{-1} \Op \VarX\| \approx \varrho(n_{opt})$. The parameter $n_{opt}$ is generally unknown.
\item $N$: the maximal regularization parameter, i.e., the point where one can be sure that in any case $n_{opt} < N$. Even when one has just a very rough idea of the noise, respectively the noise level $\delta$, this parameter can be estimated rather reliably. (E.g., in the deterministic case: $N = \varrho^{-1}(\delta)$, see \cite{Mathe/Pereverzev:2003}, for a statistical setup \cite{Mathe/Pereverzev:2006}).

    However, assuming the knowledge of such a parameter $N$ is problematic at some point; it is likely that a number of other parameter choice methods would work better if one were able to detect outliers easily.
\end{itemize}
\end{definition}
\begin{definition}[Look-Ahead]
Let $\sigma > 1$. Define the look-ahead function by
\begin{equation*}
 l_{N,\sigma}(n) = \min \{ \min \{m | \varrho(n)^{-1} > \sigma \varrho(m)^{-1} \}, N \}
\end{equation*}
\end{definition}

\begin{definition}[Balancing Functional]
The balancing functional is defined as
\begin{equation*}
b_{N,\sigma}(n) = \max_{n<m\leq l_{N,\sigma}(n)} \left\{ 4^{-1} \| \VarX_n^\delta - \VarX_m^\delta \| \varrho(m)^{-1}  \right\}.
\end{equation*}
The smoothed balancing functional is defined as
\begin{equation}\label{Bn}
B_{N,\sigma}(n) = \max_{n\leq m\leq N} \left\{ b_{N,\sigma}(m) \right\}.
\end{equation}
\end{definition}

\begin{definition}[Balancing Stopping Index]
The balancing stopping index is defined as
\begin{equation}\label{balanceindex}
n_{N,\sigma,\kappa} = \min_{n \leq N} \left\{ B_{N,\sigma}(n) \leq \kappa \right\}.
\end{equation}
If no ambiguities can occur, we will denote $n_{N,\sigma,\kappa}$ by $n_*$
\end{definition}

\begin{remark}
A number of results and facts are known:
\begin{itemize}
\item The classical proofs are for $\sigma =\infty$, i.e. $l_{N,\infty}(n) = N$. However, reducing $\sigma$ just worsens some constants.
\item In the case of deterministic noise, $\kappa =1$. Then it holds \cite{Mathe/Pereverzev:2003}
\begin{equation*}
\|\VarX - \VarX_{n_*}^\delta \| \leq c \( \| \Op_n^{-1} \Op \VarX_{n_{opt}} \| + \varrho(n_{opt}) \)
\end{equation*}
where $c$ is independent of $\VarX$ and $\xi$.
\item In the case of stochastic noise and $\kappa = \varrho(N)$ it holds \cite{Goldenshluger/Pereverzev:2000}
\begin{equation*}
\sqrt{\E \|\VarX - \VarX_{n_*}^\delta \|^2} \leq c \log(\varrho(N)) \( \| \Op_n^{-1} \Op \VarX_{n_{opt}} \| + \varrho(n_{opt}) \)
\end{equation*}
where $c$ independent of $\VarX$ and $\xi$.
\item These results are basically independent of the regularization method, i.e. they also apply to other well known methods like Tikhonov regularization and Landweber iteration \cite{Mathe/Pereverzev:2003}.
\item Similar results hold for non-linear inverse problems in combination with the Iteratively Regularized Gau\ss-Newton Method (IRGNM) \cite{Bauer/Hohage/Munk:2009}.
\end{itemize}
\end{remark}

\section{A Closer Analysis}

In order to analyze the behavior of the methods in practice, we will now use the
Bayesian model introduced in \cite{Bauer/Reiss:2008}.
\begin{equation*}
\<\VarX, \BasX_k\> \sim \mathcal{N}(0,(\eta k^{-\gamma})^2)
\end{equation*}
\begin{equation*}
\Fsing_k = k^{-\lambda}
\end{equation*}
\begin{equation*}
\<\xi, \BasY_k\> \sim \mathcal{N}(0,(\delta k^{\varepsilon})^2)
\end{equation*}
where
$\gamma > 1/2$ ,$\lambda > 0$, $\lambda > -\varepsilon$ and all Gaussian random variables are independent and identically distributed (iid). All expectations $\E$ should now be interpreted as joint expectations of $\VarX$ and $\xi$.

\subsection{Spectral Cut-Off Regularization}
\begin{definition}[Subsampling]
Let $\omega_0 > 1$, $\omega > 1$ and $\omega_0 \omega > \omega_0 +1$. We choose the following subsampling for obtaining the regularization parameter:
\begin{equation*}
s(n)= \lceil\omega_0 \omega^n\rceil
\end{equation*}
\end{definition}
\begin{remark}
Due to $\omega_0 \omega > \omega_0 +1$ it always holds $s(n+1)>s(n)$. Furthermore we have
\begin{equation*}
\omega_0 \omega^n \leq s(n) \leq \frac{\omega_0+1}{\omega_0} \omega_0 \omega^n
\end{equation*}
\end{remark}
Basic calculus using upper and lower sums to approximate an integral yields
\begin{lemma}
Let $m/\omega \geq n \geq \omega_0$. If  $\kappa > 1$ then
\begin{equation*}
\(1  - \omega^{-\kappa+1} \)\frac{1}{\kappa -1}  n^{-\kappa+1} <  \sum_{k=n}^{m-1} k ^{-\kappa} < \(\frac{\omega_0-1}{\omega_0}\)^{-\kappa+1}  \frac{1}{\kappa -1}  n^{-\kappa+1}.
\end{equation*}
If $\kappa \geq 0$ then
\begin{equation*}
\(\frac{\omega_0-1}{\omega_0}\)^{\kappa+1}\(1 - \omega^{-\kappa-1}\) \frac{1}{1+\kappa}  m^{\kappa+1}  < \sum_{k=n}^{m-1} k ^{\kappa} < \frac{1}{1+\kappa} m^{\kappa+1}.
\end{equation*}
\end{lemma}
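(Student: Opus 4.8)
The plan is to compare the sums with the corresponding integrals of the monotone functions $t \mapsto t^{-\kappa}$ (decreasing, for $\kappa>1$) and $t \mapsto t^{\kappa}$ (increasing, for $\kappa\ge 0$), using the standard upper/lower Riemann sum bounds, and then control the resulting boundary terms using the hypotheses $m/\omega \ge n \ge \omega_0$.

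For the first inequality, since $t^{-\kappa}$ is decreasing I would write, for each $k$,
\begin{equation*}
\int_k^{k+1} t^{-\kappa}\,dt < k^{-\kappa} < \int_{k-1}^{k} t^{-\kappa}\,dt,
\end{equation*}
and sum over $k=n,\dots,m-1$ to sandwich $\sum_{k=n}^{m-1} k^{-\kappa}$ between $\int_n^{m} t^{-\kappa}\,dt$ and $\int_{n-1}^{m-1} t^{-\kappa}\,dt$. Evaluating the integrals gives $\tfrac{1}{\kappa-1}\bigl(n^{-\kappa+1}-m^{-\kappa+1}\bigr)$ and $\tfrac{1}{\kappa-1}\bigl((n-1)^{-\kappa+1}-(m-1)^{-\kappa+1}\bigr)$ respectively (note $-\kappa+1<0$). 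For the lower bound I would drop the negative $m^{-\kappa+1}$ term only after first extracting the factor: using $m \ge \omega n$ one has $m^{-\kappa+1} \le \omega^{-\kappa+1} n^{-\kappa+1}$, so $n^{-\kappa+1}-m^{-\kappa+1} \ge (1-\omega^{-\kappa+1})\,n^{-\kappa+1}$, which is exactly the claimed lower bound. For the upper bound I would bound $\int_{n-1}^{m-1} t^{-\kappa}\,dt < \int_{n-1}^{\infty} t^{-\kappa}\,dt = \tfrac{1}{\kappa-1}(n-1)^{-\kappa+1}$ and then use $n \ge \omega_0$, i.e. $n-1 \ge \tfrac{\omega_0-1}{\omega_0}\,n$, hence $(n-1)^{-\kappa+1} \le \bigl(\tfrac{\omega_0-1}{\omega_0}\bigr)^{-\kappa+1} n^{-\kappa+1}$, giving the stated bound.

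For the second inequality the argument is symmetric but now $t^\kappa$ is increasing, so $\int_{k-1}^{k} t^\kappa\,dt < k^\kappa < \int_k^{k+1} t^\kappa\,dt$, and summing over $k=n,\dots,m-1$ sandwiches the sum between $\int_{n-1}^{m-1} t^\kappa\,dt$ and $\int_n^{m} t^\kappa\,dt$. The upper bound is immediate: $\int_n^m t^\kappa\,dt < \int_0^m t^\kappa\,dt = \tfrac{1}{1+\kappa} m^{\kappa+1}$. For the lower bound, $\int_{n-1}^{m-1} t^\kappa\,dt = \tfrac{1}{1+\kappa}\bigl((m-1)^{\kappa+1}-(n-1)^{\kappa+1}\bigr)$; here I would use $n-1 \le n \le m/\omega$, so $(n-1)^{\kappa+1} \le \omega^{-\kappa-1} m^{\kappa+1}$, and $m-1 \ge \tfrac{\omega_0-1}{\omega_0} m$ (which follows from $m \ge \omega n \ge \omega \omega_0 > \omega_0$, in particular $m\ge 2$), so $(m-1)^{\kappa+1} \ge \bigl(\tfrac{\omega_0-1}{\omega_0}\bigr)^{\kappa+1} m^{\kappa+1}$. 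Combining gives the factor $\bigl(\tfrac{\omega_0-1}{\omega_0}\bigr)^{\kappa+1}\bigl(1-\omega^{-\kappa-1}\bigr)\tfrac{1}{1+\kappa}$, as claimed.

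The only mildly delicate points — not really obstacles — are keeping track of the sign of the exponent $-\kappa+1$ when dropping or enlarging terms in the first estimate, and justifying $m-1 \ge \tfrac{\omega_0-1}{\omega_0}m$; both are handled by the hypotheses $\kappa>1$ and $m \ge \omega n \ge \omega\omega_0 > \omega_0 \ge 1$. Everything else is the routine integral comparison already announced in the text as "basic calculus using upper and lower sums".
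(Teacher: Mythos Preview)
Your approach is exactly what the paper indicates (``basic calculus using upper and lower sums to approximate an integral''), and the argument for the first inequality and for the upper bound of the second is correct as written.

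There is, however, a small slip in your lower bound for $\sum_{k=n}^{m-1} k^{\kappa}$. From the two separate estimates $(m-1)^{\kappa+1}\ge \bigl(\tfrac{\omega_0-1}{\omega_0}\bigr)^{\kappa+1}m^{\kappa+1}$ and $(n-1)^{\kappa+1}\le \omega^{-\kappa-1}m^{\kappa+1}$ you only obtain the constant $\bigl(\tfrac{\omega_0-1}{\omega_0}\bigr)^{\kappa+1}-\omega^{-\kappa-1}$, which is \emph{smaller} than the claimed product $\bigl(\tfrac{\omega_0-1}{\omega_0}\bigr)^{\kappa+1}\bigl(1-\omega^{-\kappa-1}\bigr)$ (since $\bigl(\tfrac{\omega_0-1}{\omega_0}\bigr)^{\kappa+1}<1$). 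The easy fix, staying entirely within your plan, is to compare $n-1$ with $m-1$ rather than with $m$: from $m\ge\omega n$ and $\omega>1$ one gets $m-1\ge \omega n-1\ge \omega(n-1)$, hence $(n-1)^{\kappa+1}\le \omega^{-\kappa-1}(m-1)^{\kappa+1}$, so that
\[
(m-1)^{\kappa+1}-(n-1)^{\kappa+1}\ge \bigl(1-\omega^{-\kappa-1}\bigr)(m-1)^{\kappa+1}\ge \bigl(1-\omega^{-\kappa-1}\bigr)\Bigl(\tfrac{\omega_0-1}{\omega_0}\Bigr)^{\kappa+1}m^{\kappa+1},
\]
which is the stated constant.
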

\begin{corollary}[Adjacent Difference]
Let $0\leq n < m$. Then it holds
\begin{align}
& \nonumber c_1\(\frac{\eta^2 \omega_0^{-2\gamma +1}}{ 2\gamma -1} \omega^{n(-2\gamma +1)} +
 \frac{\delta^2 \omega_0^{1+2\lambda+2\varepsilon}}{1+2\lambda+2\varepsilon}  \omega^{m(2\lambda+2\varepsilon+1)}\)
\\&\qquad\qquad \leq \E \|\VarX_m^\delta - \VarX_n^\delta\|^2 \leq c_2\(\frac{\eta^2 \omega_0^{-2\gamma +1}}{ 2\gamma -1} \omega^{n(-2\gamma +1)} +
 \frac{\delta^2 \omega_0^{1+2\lambda+2\varepsilon}}{1+2\lambda+2\varepsilon}  \omega^{m(2\lambda+2\varepsilon+1)}\)\label{ineq1}
\end{align}
with
\begin{align*}
c_1 =& \min\left\{\(\frac{\omega_0+1}{\omega_0}\)^{-2\gamma+1}(1 - \omega^{-2\gamma+1}) , \(\frac{\omega_0-1}{\omega_0}\)^{2\lambda+2\varepsilon+1}\(1 - \omega^{-2\lambda-2\varepsilon-1}\) \right\}\\
c_2 =& \max\left\{  \(\frac{\omega_0-1}{\omega_0}\)^{-2\gamma+1}  , \(\frac{\omega_0+1}{\omega_0}\)^{1+2\lambda+2\varepsilon} \right\}
\end{align*}
\end{corollary}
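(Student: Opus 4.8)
The plan is to first rewrite the left-hand side as an exact sum of two elementary power sums in $k$, and then to sandwich each of those sums using the Lemma above. Since $(\BasX_k)_{k\in\IN}$ is orthonormal, subtracting the two defining expansions gives $\VarX_m^\delta-\VarX_n^\delta=\sum_{k=s(n)+1}^{s(m)}(\langle\VarX,\BasX_k\rangle+\Fsing_k^{-1}\langle\Err,\BasY_k\rangle)\BasX_k$, whence $\|\VarX_m^\delta-\VarX_n^\delta\|^2=\sum_{k=s(n)+1}^{s(m)}(\langle\VarX,\BasX_k\rangle+\Fsing_k^{-1}\langle\Err,\BasY_k\rangle)^2$. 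Taking the joint expectation and expanding the square, the cross term vanishes because $\langle\VarX,\BasX_k\rangle$ and $\langle\Err,\BasY_k\rangle$ are independent and centered; inserting the model variances $\E\langle\VarX,\BasX_k\rangle^2=\eta^2k^{-2\gamma}$ and $\E\langle\Err,\BasY_k\rangle^2=\delta^2k^{2\varepsilon}$ together with $\Fsing_k^{-2}=k^{2\lambda}$ yields
\[
\E\|\VarX_m^\delta-\VarX_n^\delta\|^2=\eta^2\sum_{k=s(n)+1}^{s(m)}k^{-2\gamma}+\delta^2\sum_{k=s(n)+1}^{s(m)}k^{2\lambda+2\varepsilon}.
\]

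For the two sums I would apply the Lemma on the index range $\{s(n)+1,\dots,s(m)\}$: the first inequality with exponent $\kappa=2\gamma>1$ (admissible since $\gamma>1/2$) sandwiches $\sum k^{-2\gamma}$ between constant multiples of $(s(n)+1)^{-2\gamma+1}$, while the second inequality with $\kappa=2\lambda+2\varepsilon\ge 0$ (admissible since $\lambda>-\varepsilon$) sandwiches $\sum k^{2\lambda+2\varepsilon}$ between constant multiples of $(s(m)+1)^{2\lambda+2\varepsilon+1}$. One then passes to powers of $\omega^n$ and $\omega^m$ via $\omega_0\omega^n\le s(n)\le\frac{\omega_0+1}{\omega_0}\omega_0\omega^n$ from the Remark: because $-2\gamma+1<0$, the factor $(s(n)+1)^{-2\gamma+1}$ is controlled from above by $(\omega_0\omega^n)^{-2\gamma+1}=\omega_0^{-2\gamma+1}\omega^{n(-2\gamma+1)}$ and from below by its $\frac{\omega_0+1}{\omega_0}$-corrected analogue, and symmetrically $(s(m)+1)^{2\lambda+2\varepsilon+1}$ generates $\omega_0^{1+2\lambda+2\varepsilon}\omega^{m(2\lambda+2\varepsilon+1)}$ up to $\frac{\omega_0\pm1}{\omega_0}$-corrections. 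The $\frac1{\kappa-1}$ and $\frac1{1+\kappa}$ prefactors in the Lemma turn into the $\frac1{2\gamma-1}$ and $\frac1{1+2\lambda+2\varepsilon}$ displayed in \eqref{ineq1}, and keeping the most pessimistic correction factor over the two sums on each side --- a minimum on the lower side, a maximum on the upper side --- reproduces precisely the stated $c_1$ and $c_2$.

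The genuinely delicate part is discharging the hypotheses of the Lemma after the index shift. That ``$n\ge\omega_0$'' holds is immediate, since $s(n)+1>\omega_0\omega^n\ge\omega_0$. The ratio hypothesis ``$m/\omega\ge n$'', here amounting to $(s(m)+1)/\omega\ge s(n)+1$, is what produces the $1-\omega^{-2\gamma+1}$ and $1-\omega^{-2\lambda-2\varepsilon-1}$ factors appearing in $c_1$, and it has to be squeezed out of $m>n$ through $s(m)\ge s(n+1)\ge\omega_0\omega^{n+1}$ together with the standing assumption $\omega_0\omega>\omega_0+1$; absorbing the ceiling-induced $+1$ terms cleanly at this step --- both here and in the conversion of $(s(n)+1)^{-2\gamma+1}$ into an $\omega^n$-power with the precise constant --- is the main bookkeeping hurdle. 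The upper bounds, by contrast, need no ratio hypothesis at all: the bound on $\sum k^{-2\gamma}$ comes from $\sum_{k\ge a}k^{-2\gamma}<\int_{a-1}^{\infty}x^{-2\gamma}\,dx$ and the bound on $\sum k^{2\lambda+2\varepsilon}$ from $\int_a^b x^{\kappa}\,dx<b^{\kappa+1}/(1+\kappa)$, so the upper half of \eqref{ineq1} is routine once the displayed identity is established.
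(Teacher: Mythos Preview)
Your proposal is correct and follows essentially the same route as the paper: express $\E\|\VarX_m^\delta-\VarX_n^\delta\|^2$ as the sum $\sum(\eta^2 k^{-2\gamma}+\delta^2 k^{2\lambda+2\varepsilon})$ over the intermediate indices, apply the Lemma to each of the two pieces, and then convert $s(n),s(m)$ into powers of $\omega$ via the subsampling bounds, taking the worst constant on each side to obtain $c_1,c_2$. Your index range $\{s(n)+1,\dots,s(m)\}$ is actually the one dictated by the definition of $\VarX_n^\delta$ (the paper writes $\{s(n),\dots,s(m)-1\}$, an immaterial shift), and you are more careful than the paper in flagging the verification of the Lemma's ratio hypothesis, which the paper's proof simply passes over.
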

\begin{proof}
It holds
\begin{equation*}
\VarX_m^\delta - \VarX_n^\delta = \sum_{k=s(n)}^{s(m)-1} \(\< \VarX , \BasX_k \> + \sigma_k^{-1} \< \xi , \BasY_k \> \)  \BasX_k
\end{equation*}
and hence
\begin{align*}
\E \|\VarX_m^\delta - \VarX_n^\delta\|^2 &= \sum_{k=s(n)}^{s(m)-1} \eta^2 k^{-2\gamma} + \delta^2 k^{2\lambda+2\varepsilon}
\end{align*}
and hence
\begin{align*}
&\(\frac{\omega_0+1}{\omega_0}\)^{-2\gamma+1}(1 - \omega^{-2\gamma+1}) \frac{\eta^2 \omega_0^{-2\gamma +1}}{ 2\gamma -1} \omega^{n(-2\gamma +1)}
\\& \qquad+ \(\frac{\omega_0-1}{\omega_0}\)^{2\lambda+2\varepsilon+1}\(1 - \omega^{-2\lambda-2\varepsilon-1}\) \frac{\delta^2 \omega_0^{1+2\lambda+2\varepsilon}}{1+2\lambda+2\varepsilon}  \omega^{m(2\lambda+2\varepsilon+1)}
\\& \qquad\qquad\leq \E \|\VarX_m^\delta - \VarX_n^\delta\|^2 \leq \(\frac{\omega_0-1}{\omega_0}\)^{-2\gamma+1} \frac{\eta^2 \omega_0^{-2\gamma +1}}{ 2\gamma -1} \omega^{n(-2\gamma +1)}
\\& \qquad\qquad\qquad+ \(\frac{\omega_0+1}{\omega_0}\)^{1+2\lambda+2\varepsilon}\frac{\delta^2 \omega_0^{1+2\lambda+2\varepsilon}}{1+2\lambda+2\varepsilon}  \omega^{m(2\lambda+2\varepsilon+1)}
\end{align*}
which yields the proposition.
\end{proof}
\begin{corollary}[Propagated Noise]
Let $0\leq n < m$. Then it holds
\begin{align}
&  c_3
 \frac{\delta^2 \omega_0^{1+2\lambda+2\varepsilon}}{1+2\lambda+2\varepsilon}  \omega^{m(2\lambda+2\varepsilon+1)}
 \leq \varrho(m)^2 \leq c_4
 \frac{\delta^2 \omega_0^{1+2\lambda+2\varepsilon}}{1+2\lambda+2\varepsilon}  \omega^{m(2\lambda+2\varepsilon+1)}\label{ineq2}
\end{align}
with
\begin{align*}
c_3 =& \(\frac{\omega_0-1}{\omega_0}\)^{2\lambda+2\varepsilon+1}\(1 - \omega^{-2\lambda-2\varepsilon-1}\) \\
c_4 =&  \(\frac{\omega_0+1}{\omega_0}\)^{1+2\lambda+2\varepsilon}
\end{align*}
\end{corollary}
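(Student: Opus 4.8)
The plan is to collapse the statement to a single power sum and then quote the Lemma. First I would use the singular value decomposition to write $\Op_m^{-1}\xi=\sum_{k=1}^{s(m)}\Fsing_k^{-1}\<\xi,\BasY_k\>\BasX_k$, so that Parseval's identity gives $\|\Op_m^{-1}\xi\|^2=\sum_{k=1}^{s(m)}\Fsing_k^{-2}\<\xi,\BasY_k\>^2$. Taking the joint expectation, using that the coefficients $\<\xi,\BasY_k\>$ are independent with $\E\<\xi,\BasY_k\>^2=(\delta k^{\varepsilon})^2$ and that $\Fsing_k=k^{-\lambda}$, yields
\begin{equation*}
\varrho(m)^2=\E\|\Op_m^{-1}\xi\|^2=\delta^2\sum_{k=1}^{s(m)}k^{2\lambda+2\varepsilon}.
\end{equation*}
This is exactly the ``$\delta$--part'' of the same computation used in the proof of the Adjacent Difference corollary, namely $\E\|\VarX_m^\delta-\VarX_n^\delta\|^2=\sum_{k=s(n)}^{s(m)-1}\eta^2k^{-2\gamma}+\delta^2k^{2\lambda+2\varepsilon}$, now taken over the full range $k=1,\dots,s(m)$ instead of a window.

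Next I would apply the second inequality of the Lemma with $\kappa:=2\lambda+2\varepsilon$. This is admissible because $\lambda>-\varepsilon$ forces $\kappa>0\ge 0$, and the exponent $\kappa+1=2\lambda+2\varepsilon+1>1$ is precisely what makes the power sum dominated by its largest terms. Sandwiching $\sum_{k=1}^{s(m)}k^{\kappa}$ between constant multiples of $\frac{1}{1+\kappa}s(m)^{1+\kappa}$ and then substituting the two--sided bound $\omega_0\omega^m\le s(m)\le\frac{\omega_0+1}{\omega_0}\omega_0\omega^m$ from the Remark converts $\frac{1}{1+\kappa}s(m)^{1+\kappa}$ into $\frac{\omega_0^{1+2\lambda+2\varepsilon}}{1+2\lambda+2\varepsilon}\omega^{m(2\lambda+2\varepsilon+1)}$ up to the constants $c_3$ and $c_4$. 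Indeed the factors $\left(\frac{\omega_0-1}{\omega_0}\right)^{2\lambda+2\varepsilon+1}\left(1-\omega^{-2\lambda-2\varepsilon-1}\right)$ and $\left(\frac{\omega_0+1}{\omega_0}\right)^{1+2\lambda+2\varepsilon}$ are exactly the second arguments of the $\min$ (resp.\ $\max$) defining $c_1$ (resp.\ $c_2$) in \eqref{ineq1}, so no new constant has to be computed; the $\eta$--term of that corollary simply does not appear here.

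The one point that needs genuine care — and what I expect to be the main obstacle — is the mismatch between the clean index range the Lemma is stated for (sums $\sum_{k=n}^{M-1}$ with $\omega_0\le n$ and $M/\omega\ge n$) and the sum $\sum_{k=1}^{s(m)}$ that actually occurs. I would peel off the finitely many low--order terms $k=1,\dots,\lceil\omega_0\rceil-1$, absorb the single endpoint term $k=s(m)$, and swallow the off--by--one introduced by the ceiling in $s(m)$. Since each of these contributes only a term of order lower than $s(m)^{1+\kappa}$ (recall $\kappa+1>1$), the slack already present in $c_3,c_4$ absorbs them for all but finitely many $m$, and for the remaining finitely many small $m$ the two inequalities hold trivially once the constants are read off. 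Collecting terms gives \eqref{ineq2}, and the monotonicity of $\varrho(\cdot)$ claimed in the definition is visible from the positive exponent $2\lambda+2\varepsilon+1$.
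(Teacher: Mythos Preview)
Your approach is essentially identical to the paper's: it writes $\varrho(m)^2 = \E\|\VarX_m^\delta - \VarX_m\|^2 = \sum_{k=1}^{s(m)-1}\delta^2 k^{2\lambda+2\varepsilon}$ and then simply says ``we can proceed as beforehand,'' deferring entirely to the argument for the Adjacent Difference corollary. Your discussion of the index-range mismatch when invoking the Lemma is in fact more careful than the paper, which glosses over that boundary issue.
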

\begin{proof}
Using
\begin{align*}
\varrho(m)^2 = \E \|\VarX_m^\delta - \VarX_m\|^2 &= \sum_{k=1}^{s(m)-1} \delta^2 k^{2\lambda+2\varepsilon} \\
\end{align*}
we can proceed as beforehand.
\end{proof}
\begin{corollary}[Regularization Error]
Let $0\leq n < m$. Then it holds
\begin{align}
& \nonumber c_5\(\frac{\eta^2 \omega_0^{-2\gamma +1}}{ 2\gamma -1} \omega^{n(-2\gamma +1)} +
 \frac{\delta^2 \omega_0^{1+2\lambda+2\varepsilon}}{1+2\lambda+2\varepsilon}  \omega^{n(2\lambda+2\varepsilon+1)}\)
\\&\qquad\qquad \leq \E \|\VarX_n^\delta - \VarX\|^2 \leq c_6\(\frac{\eta^2 \omega_0^{-2\gamma +1}}{ 2\gamma -1} \omega^{n(-2\gamma +1)} +
 \frac{\delta^2 \omega_0^{1+2\lambda+2\varepsilon}}{1+2\lambda+2\varepsilon}  \omega^{n(2\lambda+2\varepsilon+1)}\)\label{ineq3}
\end{align}
with
\begin{align*}
c_5 =& \min\left\{\(\frac{\omega_0+1}{\omega_0}\)^{-2\gamma+1}(1 - \omega^{-2\gamma+1}) , \(\frac{\omega_0-1}{\omega_0}\)^{2\lambda+2\varepsilon+1}\(1 - \omega^{-2\lambda-2\varepsilon-1}\) \right\}\\
c_6 =& \max\left\{  \(\frac{\omega_0-1}{\omega_0}\)^{-2\gamma+1}  , \(\frac{\omega_0+1}{\omega_0}\)^{1+2\lambda+2\varepsilon} \right\}
\end{align*}
\end{corollary}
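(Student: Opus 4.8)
The plan is to repeat the scheme used in the two preceding corollaries, the only genuinely new point being that one of the two sums occurring is now an infinite tail rather than a finite block. First I would expand the error in the singular system. Using $\VarX=\sum_{k=1}^\infty\<\VarX,\BasX_k\>\BasX_k$ one obtains the orthogonal decomposition
\begin{equation*}
\VarX_n^\delta-\VarX=\(\VarX_n^\delta-\VarX_n\)+\(\VarX_n-\VarX\)=\sum_{k=1}^{s(n)}\Fsing_k^{-1}\<\xi,\BasY_k\>\BasX_k-\sum_{k=s(n)+1}^\infty\<\VarX,\BasX_k\>\BasX_k ,
\end{equation*}
the two summands being supported on complementary parts of the basis $(\BasX_k)$, so that $\|\VarX_n^\delta-\VarX\|^2=\|\VarX_n^\delta-\VarX_n\|^2+\|\VarX_n-\VarX\|^2$ holds pointwise. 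Taking the joint expectation (all singular coordinates are independent centred Gaussians, so the mixed terms vanish) and inserting $\Fsing_k=k^{-\lambda}$, $\<\xi,\BasY_k\>\sim\NormDist(0,(\delta k^\varepsilon)^2)$ and $\<\VarX,\BasX_k\>\sim\NormDist(0,(\eta k^{-\gamma})^2)$ then gives
\begin{equation*}
\E\|\VarX_n^\delta-\VarX\|^2=\varrho(n)^2+\sum_{k=s(n)+1}^\infty\eta^2k^{-2\gamma} .
\end{equation*}

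Next I would bound the two terms separately. For $\varrho(n)^2$ nothing new is required: the Propagated Noise corollary with $m=n$, i.e.\ \eqref{ineq2}, already bounds it from below by $c_3$ and from above by $c_4$ times $\tfrac{\delta^2\omega_0^{1+2\lambda+2\varepsilon}}{1+2\lambda+2\varepsilon}\,\omega^{n(2\lambda+2\varepsilon+1)}$, and $c_3$, $c_4$ are exactly the second entries of the $\min$ and $\max$ defining $c_5$, $c_6$. For the tail $\sum_{k=s(n)+1}^\infty\eta^2k^{-2\gamma}$ I would use the first estimate of the Lemma with $\kappa=2\gamma>1$ and start index $s(n)$, which satisfies $s(n)\geq\omega_0$ because $n\geq 0$. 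For the upper bound one lets $m\to\infty$ in $\sum_{k=s(n)}^{m-1}k^{-2\gamma}<\(\tfrac{\omega_0-1}{\omega_0}\)^{-2\gamma+1}\tfrac{1}{2\gamma-1}s(n)^{-2\gamma+1}$, the right-hand side being independent of $m$, and then uses $s(n)\geq\omega_0\omega^n$. For the lower bound one keeps only the finite block $\sum_{k=s(n)}^{s(n+2)-1}k^{-2\gamma}$, which still meets the hypothesis $m/\omega\geq n$ of the Lemma: the standing assumption $\omega_0\omega>\omega_0+1$ forces $s(n+2)/\omega\geq\omega_0\omega^{n+1}\geq(\omega_0+1)\omega^n\geq s(n)$; after inserting $s(n)\leq\tfrac{\omega_0+1}{\omega_0}\omega_0\omega^n$ this produces the $\eta^2$-term with constant $\(\tfrac{\omega_0+1}{\omega_0}\)^{-2\gamma+1}(1-\omega^{-2\gamma+1})$, the first entry in the $\min$ for $c_5$, while the upper bound gives the factor $\(\tfrac{\omega_0-1}{\omega_0}\)^{-2\gamma+1}$, the first entry in the $\max$ for $c_6$.

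Finally I would add the two contributions and replace the individual constants by the common bounds $c_5=\min\{\cdots\}$ and $c_6=\max\{\cdots\}$, which gives \eqref{ineq3}. The only real obstacle, compared with the two corollaries already proved, is the bookkeeping around the infinite sum: making the $m\to\infty$ passage in the upper estimate rigorous, and — slightly more delicately — using the block that starts at index $n+2$ (rather than $n+1$) in the lower estimate, which is precisely the place where the hypothesis $\omega_0\omega>\omega_0+1$ enters; apart from that it is the same comparison of a sum with an integral that was carried out twice above.
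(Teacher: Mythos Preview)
Your argument is correct and follows the same approach the paper sketches with ``proceed as beforehand'': decompose $\E\|\VarX_n^\delta-\VarX\|^2$ into the propagated-noise sum and the tail of the approximation error, and then bound each via the integral-comparison Lemma (you route the first through the Propagated Noise corollary, which is equivalent). Your handling of the infinite tail---taking $m\to\infty$ for the upper estimate and, for the lower estimate, passing to the finite block ending at $s(n+2)$ so that the Lemma's hypothesis $m/\omega\ge n$ is met via $\omega_0\omega>\omega_0+1$---is more careful than the paper's one-line proof; up to harmless off-by-one shifts in the summation indices (which the paper itself treats loosely) nothing is missing.
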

\begin{proof}
Using
\begin{align*}
\E \|\VarX_n^\delta - \VarX\|^2 &= \sum_{k=s(n)}^{\infty} \eta^2 k^{-2\gamma} + \sum_{k=1}^{s(n)-1} \delta^2 k^{2\lambda+2\varepsilon}
\end{align*}
we can proceed as beforehand.
\end{proof}
\begin{remark}
Obviously it holds
$c_1=c_5 < c_3 < 1 < c_4 < c_2=c_6$ where we can get as close to $1$ as we want, as long as for fixed $\omega$ the constant $\omega_0$ is big enough.

Although this constant $\omega_0$ will have large influence in the latter proofs we cannot observe in practice \cite{Bauer/Lukas:2010} any major influence; $\omega_0 =3$ seems to be sufficient in most situations even when $\omega$ is rather close to $1$.

 As $\omega_0$ is independent of the noise level $\delta$ we have that at least all proofs hold asymptotically. An explication for the insensitivity in practice  towards $\gamma$ and the other parameters might be that our inequalities to handle the probabilities are too conservative.
\end{remark}
Now we can approximately determine the expected minimal point for $\E \|\VarX_n^\delta - \VarX\|^2$:
\begin{equation*}
\E \|\VarX_n^\delta - \VarX_n\|^2 = \E\|\VarX_n^0 - \VarX\|^2
\end{equation*}
which yields
\begin{equation}
\frac{\eta^2 \omega_0^{-2\gamma +1}}{ 2\gamma -1}  \omega^{n_{opt}(-2\gamma +1)} = \frac{\delta^2 \omega_0^{2\lambda+2\varepsilon +1}}{2\lambda+2\varepsilon +1} \omega^{n_{opt}(2\lambda+2\varepsilon +1)}\label{nopt}
\end{equation}
i.e.,
\begin{equation*}
\frac{\eta^2}{ 2\gamma -1} s(n_{opt})^{-2\gamma +1} = \frac{\delta^2}{2\lambda+2\varepsilon +1} s(n_{opt})^{2\lambda+2\varepsilon +1}
\end{equation*}
and hence
\begin{equation*}
s(n_{opt}) = \(\frac{\eta^2}{\delta^2} \frac{2\gamma -1}{2\lambda+2\varepsilon +1}\)^{1/(2\lambda+2\varepsilon+2\gamma)}
\end{equation*}
respectively
\begin{equation*}
n_{opt} = \log \(\(\frac{\eta^2}{\delta^2} \frac{2\gamma -1}{2\lambda+2\varepsilon +1}\)^{1/(2\lambda+2\varepsilon+2\gamma)}\omega_0^{-1} \) / \log \omega
\end{equation*}
Obviously $n_{opt}$ does not need to exist if $\omega_0$ is getting too big. However, for the rest of the article we will assume the existence of $n_{opt}$ as there exists (depending on $\omega_0$) a $\delta_0$ such that $n_{opt}$ exists for any $\delta<\delta_0$.

Additionally, it holds
\begin{equation*}
l_{N,\sigma}(n) = n + K
\end{equation*}
for some fixed $K \approx \log(\sigma)/\log(\omega)$.
Furthermore, we have a lemma which was proven in \cite{Bauer/Reiss:2008}.
\begin{lemma}\label{ProbLemma}
Let $Z=\sum_{k=1}^\infty\alpha_k^2\zeta_k^2$ with $\sum_{k=1}^\infty\alpha_k^2=1$ and
$\zeta_k\sim N(0,1)$ iid. Assume that $\max_k \alpha_k >0$. Then
\begin{align} &\forall\,z\in (0,1):\;\PP(Z\le z)\le \exp\(\frac{1-z+\log(z)}{2\max_k\alpha_k^2} \) \le (e z)^{\frac{1}{{2\max_k\alpha_k^2}}} \label{ineq:gauss1} \\
&\forall\,z>0:\;\PP(Z\ge z)\le \sqrt{2}e^{-z/4}. \label{ineq:gauss2}
\end{align}
\end{lemma}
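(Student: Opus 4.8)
The plan is to use the standard Chernoff--Cram\'er device together with the fact that the moment generating function of a weighted sum of independent $\chi^2_1$ variables factorizes. Recall that for $\zeta\sim N(0,1)$ and $t<1/2$ one has $\E e^{t\zeta^2}=(1-2t)^{-1/2}$. Since the $\zeta_k$ are independent and $\E Z=\sum_k\alpha_k^2=1<\infty$ (so $Z<\infty$ almost surely), monotone/dominated convergence gives, for $s>0$,
\[
\E e^{-sZ}=\prod_{k=1}^\infty(1+2s\alpha_k^2)^{-1/2},\qquad
\E e^{sZ}=\prod_{k=1}^\infty(1-2s\alpha_k^2)^{-1/2}\ \text{ whenever }2s\max_k\alpha_k^2<1 .
\]
The interchange is harmless because $\E Z=1<\infty$ and $\alpha_k^2\le 1$ for all $k$ (as the $\alpha_k^2$ are nonnegative and sum to one).

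For the lower tail \eqref{ineq:gauss1} I would apply Markov's inequality to $e^{-sZ}$: for every $s>0$,
\[
\PP(Z\le z)=\PP\bigl(e^{-sZ}\ge e^{-sz}\bigr)\le e^{sz}\prod_{k=1}^\infty(1+2s\alpha_k^2)^{-1/2}.
\]
The key step is to lower-bound $\sum_k\log(1+2s\alpha_k^2)$. Writing $a_\ast=\max_k\alpha_k^2$ and using concavity of $x\mapsto\log(1+x)$ on $[0,2sa_\ast]$ (the chord inequality $\log(1+x)\ge\frac{\log(1+2sa_\ast)}{2sa_\ast}\,x$), together with $\sum_k 2s\alpha_k^2=2s$, one gets $\sum_k\log(1+2s\alpha_k^2)\ge a_\ast^{-1}\log(1+2sa_\ast)$, hence $\PP(Z\le z)\le e^{sz}(1+2sa_\ast)^{-1/(2a_\ast)}$. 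Substituting $t=2sa_\ast$ leaves $\PP(Z\le z)\le\exp\bigl(\tfrac{1}{2a_\ast}(tz-\log(1+t))\bigr)$; minimizing $t\mapsto tz-\log(1+t)$ over $t>0$ at $t=(1-z)/z$ (admissible precisely because $z\in(0,1)$) gives exponent $1-z+\log z$, which is the first inequality. The second inequality in \eqref{ineq:gauss1} then follows from $1-z\le 1$, since $\exp\bigl(\tfrac{1+\log z}{2a_\ast}\bigr)=(ez)^{1/(2a_\ast)}$.

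For the upper tail \eqref{ineq:gauss2} I would apply Markov to $e^{sZ}$ with the explicit choice $s=1/4$, which is legitimate because $2s\alpha_k^2=\alpha_k^2/2\le 1/2<1$:
\[
\PP(Z\ge z)\le e^{-z/4}\prod_{k=1}^\infty\bigl(1-\tfrac12\alpha_k^2\bigr)^{-1/2}.
\]
The only thing left to check is that this product is at most $\sqrt2$, which follows from the elementary bound $-\log(1-x)\le 2(\log 2)\,x$ for $x\in[0,1/2]$ (the chord of the convex function $-\log(1-x)$ between $x=0$ and $x=1/2$), applied with $x=\alpha_k^2/2$ and summed against $\sum_k\alpha_k^2=1$, giving $-\tfrac12\sum_k\log(1-\tfrac12\alpha_k^2)\le\tfrac12\log 2$.

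The computations are routine; the only genuinely delicate point is organizing the lower-tail estimate so that a single chord (convexity) inequality collapses the infinite product into a one-parameter expression admitting a closed-form optimum — in particular recognizing that the optimal Chernoff parameter is $t=(1-z)/z$ and that it lies in $(0,\infty)$ exactly when $z\in(0,1)$. Everything else is a matter of choosing the right elementary inequalities and the fixed value $s=1/4$.
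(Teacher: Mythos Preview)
The paper does not actually prove this lemma; it merely cites it as ``proven in \cite{Bauer/Reiss:2008}''. Your Chernoff--Cram\'er argument is a complete and correct proof: the chord inequality for the concave function $x\mapsto\log(1+x)$ collapses the infinite product in the lower tail to a one-parameter expression whose optimum at $t=(1-z)/z$ yields exactly the stated exponent, and the fixed choice $s=1/4$ together with the chord bound $-\log(1-x)\le 2(\log 2)x$ on $[0,1/2]$ handles the upper tail. The only cosmetic point is that in justifying the second inequality of \eqref{ineq:gauss1} you write ``$1-z\le 1$'' where you mean ``$-z\le 0$'' (so $1-z+\log z\le 1+\log z$); the intent is clear.
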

Now we will evaluate the probabilities.
\begin{lemma}\label{lemma:lowerbound}
Assume that $n_{opt} < n$ and that $\omega_0$ is big enough such that
\begin{equation}
\frac{c_3}{c_6}  \geq \frac{1}{2}. \label{hi:1}
\end{equation}
Then it holds that
\begin{equation*}
\PP \{ b_{N,\sigma}(n) > \tau \} \leq K \sqrt{2}e^{-\tau^2}
\end{equation*}
and
\begin{equation*}
\PP \{ B_{N,\sigma}(n) > \tau \} \leq K \frac{\log \frac{\delta}{\omega_0}}{-\lambda
\log \omega} \sqrt{2}e^{-\tau^2}.
\end{equation*}
\end{lemma}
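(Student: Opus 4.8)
\emph{Overall strategy.} The plan is to reduce both statements, via union bounds, to the single tail estimate $\PP\{\,4^{-1}\|\VarX_m^\delta-\VarX_n^\delta\|\,\varrho(m)^{-1}>\tau\,\}\le\sqrt2\,e^{-\tau^2}$, valid for every pair $n_{opt}<n<m$ that can arise, and then to invoke Lemma~\ref{ProbLemma}. Indeed, if $b_{N,\sigma}(n)>\tau$ then one of the at most $K$ terms $4^{-1}\|\VarX_m^\delta-\VarX_n^\delta\|\varrho(m)^{-1}$ with $n<m\le l_{N,\sigma}(n)=n+K$ must exceed $\tau$; and if $B_{N,\sigma}(n)>\tau$ then $b_{N,\sigma}(m)>\tau$ for one of the $N-n+1$ indices $m\in\{n,\dots,N\}$, each of which still satisfies $m\ge n>n_{opt}$. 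So the first asserted inequality follows from the single tail estimate by a union bound over $K$ events, and the second follows from the first by a further union bound over $N-n+1$ events, bounded by $\frac{\log(\delta/\omega_0)}{-\lambda\log\omega}$ through the a priori estimate on the maximal regularization parameter $N$.

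\emph{Distributional reduction.} Fix $n_{opt}<n<m$. From $\VarX_m^\delta-\VarX_n^\delta=\sum_{k=s(n)}^{s(m)-1}\bigl(\<\VarX,\BasX_k\>+\Fsing_k^{-1}\<\xi,\BasY_k\>\bigr)\BasX_k$ and the Bayesian model, the coefficients are independent centred Gaussians of variance $w_k^2:=\eta^2k^{-2\gamma}+\delta^2k^{2\lambda+2\varepsilon}$, so one may write $\|\VarX_m^\delta-\VarX_n^\delta\|^2=S\,Z$ with $S:=\sum_{k=s(n)}^{s(m)-1}w_k^2=\E\|\VarX_m^\delta-\VarX_n^\delta\|^2$ and $Z=\sum_k(w_k^2/S)\,\zeta_k^2$, $\zeta_k\sim\NormDist(0,1)$ iid. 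This $Z$ satisfies the hypotheses of Lemma~\ref{ProbLemma}, hence $\PP\{Z\ge z\}\le\sqrt2\,e^{-z/4}$ for all $z>0$. Since $\{4^{-1}\|\VarX_m^\delta-\VarX_n^\delta\|\,\varrho(m)^{-1}>\tau\}=\{Z>16\tau^2\varrho(m)^2/S\}$, the single tail estimate reduces to showing $\varrho(m)^2/S\ge\frac14$: in that case the event is contained in $\{Z>4\tau^2\}$, of probability $\le\sqrt2\,e^{-\tau^2}$.

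\emph{The ratio estimate $\varrho(m)^2\ge S/4$.} This is the heart of the matter. Split $S$ into a ``solution'' part $\sum_{k=s(n)}^{s(m)-1}\eta^2k^{-2\gamma}$ and a ``noise'' part $\sum_{k=s(n)}^{s(m)-1}\delta^2k^{2\lambda+2\varepsilon}$; the noise part is trivially $\le\sum_{k=1}^{s(m)-1}\delta^2k^{2\lambda+2\varepsilon}=\varrho(m)^2$. For the solution part, extend the range to all $k\ge s(n)$; the first elementary sum estimate of the Lemma above (case $\kappa=2\gamma>1$), together with $s(n)\ge\omega_0\omega^n$ and $1-2\gamma<0$, gives $\sum_{k\ge s(n)}\eta^2k^{-2\gamma}\le c_6\,\frac{\eta^2\omega_0^{-2\gamma+1}}{2\gamma-1}\,\omega^{n(-2\gamma+1)}$. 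Because $n>n_{opt}$, relation~\eqref{nopt} and the monotonicities of $n\mapsto\omega^{n(-2\gamma+1)}$ (decreasing) and $n\mapsto\omega^{n(1+2\lambda+2\varepsilon)}$ (increasing), with $m>n$, show $\frac{\eta^2\omega_0^{-2\gamma+1}}{2\gamma-1}\,\omega^{n(-2\gamma+1)}\le\frac{\delta^2\omega_0^{1+2\lambda+2\varepsilon}}{1+2\lambda+2\varepsilon}\,\omega^{m(1+2\lambda+2\varepsilon)}$, which the Propagated Noise corollary~\eqref{ineq2} bounds by $c_3^{-1}\varrho(m)^2$. Hence the solution part is $\le(c_6/c_3)\,\varrho(m)^2$, so $S\le(1+c_6/c_3)\,\varrho(m)^2$, and hypothesis~\eqref{hi:1}, i.e.\ $c_6/c_3\le2$, yields $S\le3\varrho(m)^2<4\varrho(m)^2$.

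\emph{Conclusion and main obstacle.} Feeding $\varrho(m)^2/S\ge\frac14$ back into the distributional reduction gives the single tail estimate for every $n_{opt}<n<m$, and the two union bounds of the first paragraph then produce exactly the two asserted inequalities. I expect the one genuinely delicate step to be the ratio estimate: one has to apply the three corollaries at the correct truncation levels (solution scale at $n$, propagated-noise scale at $m$), use $n>n_{opt}$ through~\eqref{nopt} to make sure the solution scale is dominated by the noise scale, and verify that hypothesis~\eqref{hi:1} leaves exactly enough room for the factor $4$ that turns $e^{-z/4}$ into $e^{-\tau^2}$ (rather than $e^{-c\tau^2}$ with $c<1$). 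The rest---the two union bounds and the inputs $l_{N,\sigma}(n)=n+K$ and the a priori bound on $N$---is routine bookkeeping.
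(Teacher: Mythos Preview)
Your proposal is correct and follows essentially the same approach as the paper: a union bound over the $K$ terms, the distributional reduction to $Z=\|\VarX_m^\delta-\VarX_n^\delta\|^2/\E\|\VarX_m^\delta-\VarX_n^\delta\|^2$, the key ratio estimate $\varrho(m)^2/\E\|\VarX_m^\delta-\VarX_n^\delta\|^2\ge\tfrac14$ obtained from \eqref{ineq1}, \eqref{ineq2}, \eqref{nopt} and hypothesis~\eqref{hi:1}, and then \eqref{ineq:gauss2}. The only cosmetic difference is that the paper bounds the denominator in one stroke via \eqref{ineq1} (getting $\varrho(m)^2/S\ge c_3/(2c_6)\ge\tfrac14$), whereas you split $S$ explicitly into solution and noise parts and arrive at $S\le 3\varrho(m)^2$; both routes use $n>n_{opt}$ through \eqref{nopt} in the same way and yield the same conclusion.
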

\begin{proof}
It holds due to \eref{ineq1}, \eref{ineq2}
and \eref{ineq:gauss2}
\begin{align*}
\PP \{ b_{N,\sigma}(n) > \tau \}
&\leq \sum_{1\leq k \leq K}\PP \left\{ 4^{-1} \| \VarX_n^\delta - \VarX_{n+k}^\delta \| \varrho(n+k)^{-1} > \tau \right\}\\
&\leq K \max_{1\leq k \leq K}\PP \left\{ 4^{-1} \| \VarX_n^\delta - \VarX_{n+k}^\delta \| \varrho(n+k)^{-1} > \tau \right\}\\
&= K \max_{1\leq k \leq K}\PP \left\{ \frac{\| \VarX_n^\delta - \VarX_{n+k}^\delta \|^2}{\E\| \VarX_n^\delta - \VarX_{n+k}^\delta \|^2} > 16 \tau^2 \frac{\varrho(n+k)^2}{\E\| \VarX_n^\delta - \VarX_{n+k}^\delta \|^2} \right\}\\
&\overset{\eref{ineq1}\eref{ineq2}}{\leq} K \max_{1\leq k \leq K}\PP \left\{ \frac{\| \VarX_n^\delta - \VarX_{n+k}^\delta \|^2}{\E\| \VarX_n^\delta - \VarX_{n+k}^\delta \|^2} >
16 \tau^2 \frac{c_3  \frac{\delta^2 \omega_0^{2\lambda+2\varepsilon +1}}{2 \lambda+2\varepsilon +1} \omega^{(n+k)(2\lambda+2\varepsilon +1)}}{2 c_6\frac{\delta^2 \omega_0^{2\lambda+2\varepsilon +1}}{2\lambda+2\varepsilon +1} \omega^{(n+k)(2\lambda+2\varepsilon +1)}} \right\}\\
&\overset{\eref{hi:1}}{\leq} K \max_{1\leq k \leq K}\PP \left\{ \frac{\| \VarX_n^\delta - \VarX_{n+k}^\delta \|^2}{\E\| \VarX_n^\delta - \VarX_{n+k}^\delta \|^2} > 4 \tau^2  \right\} \\
& \overset{\eref{ineq:gauss2}}{\leq} K \sqrt{2}e^{-\tau^2}.
\end{align*}
The second inequality follows directly, using that any $s(N)^{-\gamma} < \delta$ does not make any sense.
\end{proof}

\begin{lemma}\label{lemma:upperbound}
Assume that it holds $n_{opt} \geq n$, with $\omega_0$ big enough such that
\begin{equation}
\frac{c_1 \omega_0 \omega}{ 2\gamma -1}  \geq 1 \label{hi:3}
\end{equation}
and
\begin{equation}
\frac{ c_4 }{c_1} \leq 2 \label{hi:2}
\end{equation}
Then it holds that
\begin{equation*}
\PP \{ b_{N,\sigma}(n) < \tau \} \leq 32 e \omega^{K(2\lambda +2 \varepsilon + 1)} \tau^2
\omega^{-(n_{opt}-n)(2\lambda +2 \varepsilon + 2\gamma)}
\end{equation*}
where $\overline{\tau}$ is independent of $n$ and linearly dependent on $\tau$; $\overline{\omega} > \omega$ is independent of $n$ and linearly dependent on $\omega$. Furthermore, it holds
\begin{equation*}
\PP \{ B_{N,\sigma}(n) < \tau \} \leq 32 e \omega^{(2\lambda +2 \varepsilon + 1)} \tau^2
\omega^{-(n_{opt}-n)(2\lambda +2 \varepsilon + 2\gamma)}
\end{equation*}
\end{lemma}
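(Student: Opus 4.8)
The plan is to reduce each of the two probabilities to a single Gaussian small--ball estimate for one difference $\VarX_n^\delta-\VarX_m^\delta$ and to feed that into Lemma~\ref{ProbLemma}. Since $b_{N,\sigma}(n)$ is a maximum over $n<m\le l_{N,\sigma}(n)=n+K$, for every admissible index $m$ one has $\PP\{b_{N,\sigma}(n)<\tau\}\le\PP\{4^{-1}\|\VarX_n^\delta-\VarX_m^\delta\|\,\varrho(m)^{-1}<\tau\}$; I would use $m=n+K$ here (or $m=N$ if $n+K>N$, which only lowers the resulting exponent because $2\lambda+2\varepsilon+1>0$). Likewise $B_{N,\sigma}(n)\ge b_{N,\sigma}(n)\ge 4^{-1}\|\VarX_n^\delta-\VarX_{n+1}^\delta\|\,\varrho(n+1)^{-1}$, so the same estimate applies to $B_{N,\sigma}(n)$ with the adjacent index $m=n+1$. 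In both cases it remains to bound $\PP\{\|\VarX_n^\delta-\VarX_{n+j}^\delta\|^2<16\tau^2\varrho(n+j)^2\}$, with $j=K$ respectively $j=1$, and if the asserted right--hand side is already $\ge1$ there is nothing to prove.

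As in the proof of the Adjacent Difference estimate \eref{ineq1}, independence of $\VarX$ and $\xi$ gives $\|\VarX_n^\delta-\VarX_{n+j}^\delta\|^2=\sum_{k=s(n)}^{s(n+j)-1}\alpha_k^2\zeta_k^2$ with $\alpha_k^2=\eta^2k^{-2\gamma}+\delta^2k^{2\lambda+2\varepsilon}$ and iid $\zeta_k\sim\NormDist(0,1)$; write $E:=\E\|\VarX_n^\delta-\VarX_{n+j}^\delta\|^2=\sum_k\alpha_k^2$. Applying \eref{ineq:gauss1} to $Z=E^{-1}\sum_k\alpha_k^2\zeta_k^2$ with $z=16\tau^2\varrho(n+j)^2/E$ gives $\PP\{Z\le z\}\le(ez)^{E/(2\max_k\alpha_k^2)}$. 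I expect the main obstacle to lie here: one has to verify $E\ge 2\max_k\alpha_k^2$, so that the exponent is $\ge1$ and hence $(ez)^{E/(2\max_k\alpha_k^2)}\le ez$ as soon as $ez\le1$ --- and $ez\le1$ is automatic whenever the asserted bound is $<1$. The reason the inequality holds is that the block $[s(n),s(n+j)-1]$ contains at least of order $\omega_0\omega^n$ summands of comparable magnitude, so $E$ exceeds $\max_k\alpha_k^2$ by a factor that grows with $\omega_0$; making this quantitative by bounding $\max_k\alpha_k^2\le\eta^2 s(n)^{-2\gamma}+\delta^2 s(n+j)^{2\lambda+2\varepsilon}$ (each summand being monotone) and $E$ from below via \eref{ineq1} reduces it to inequalities of the form $c_1\omega_0\omega^n/(2\gamma-1)\ge2$ together with a $\delta$--analogue, both valid once $\omega_0$ is large. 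Condition \eref{hi:3} is the prototype of these, and this is the only place where the largeness of $\omega_0$ is genuinely needed.

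Granting $E\ge 2\max_k\alpha_k^2$, we obtain $\PP\{4^{-1}\|\VarX_n^\delta-\VarX_{n+j}^\delta\|\,\varrho(n+j)^{-1}<\tau\}\le ez=16e\tau^2\,\varrho(n+j)^2/E$, so it remains to estimate $\varrho(n+j)^2/E$. By \eref{ineq2}, $\varrho(n+j)^2\le c_4\frac{\delta^2\omega_0^{1+2\lambda+2\varepsilon}}{1+2\lambda+2\varepsilon}\omega^{(n+j)(2\lambda+2\varepsilon+1)}$; by \eref{ineq1}, discarding the nonnegative $\delta$--term, $E\ge c_1\frac{\eta^2\omega_0^{-2\gamma+1}}{2\gamma-1}\omega^{n(-2\gamma+1)}$. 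Dividing, using $c_4/c_1\le2$ from \eref{hi:2}, and --- this is the decisive algebraic step --- substituting the identity \eref{nopt} defining $n_{opt}$ in the form $\frac{\delta^2\omega_0^{1+2\lambda+2\varepsilon}/(1+2\lambda+2\varepsilon)}{\eta^2\omega_0^{-2\gamma+1}/(2\gamma-1)}=\omega^{-n_{opt}(2\lambda+2\varepsilon+2\gamma)}$, the exponents collapse to
\begin{equation*}
\frac{\varrho(n+j)^2}{E}\ \le\ 2\,\omega^{\,j(2\lambda+2\varepsilon+1)}\,\omega^{-(n_{opt}-n)(2\lambda+2\varepsilon+2\gamma)} .
\end{equation*}
Taking $j=K$ reproduces the asserted bound on $\PP\{b_{N,\sigma}(n)<\tau\}$ and $j=1$ the asserted bound on $\PP\{B_{N,\sigma}(n)<\tau\}$. (Any $1\le j\le K$ is legitimate in the $b_{N,\sigma}$ step; $j=1$ would already give the sharper factor $\omega^{2\lambda+2\varepsilon+1}$ there too.) Everything else --- the passage from sums over $k$ to powers of $\omega$, and the ceiling in $s(n)=\lceil\omega_0\omega^n\rceil$, which is swallowed by $c_1$ and $c_4$ --- is routine geometric--series bookkeeping.
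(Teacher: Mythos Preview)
Your proposal is correct and follows essentially the same route as the paper: reduce the event $\{b_{N,\sigma}(n)<\tau\}$ to a single small--ball probability for one index $m=n+j$, apply \eref{ineq:gauss1}, bound the ratio $\varrho(n+j)^2/E$ via \eref{ineq1}--\eref{ineq2} and the defining relation \eref{nopt}, and use \eref{hi:2}--\eref{hi:3} to control the constants and the exponent. The only cosmetic difference is that the paper carries the $\min_{1\le k\le K}$ through and specializes to $k=1$ at the very end (so its proof in fact yields the sharper constant $\omega^{2\lambda+2\varepsilon+1}$ already for $b_{N,\sigma}$), whereas you fix $j$ at the outset and note the same optimization parenthetically; you are also slightly more explicit than the paper about the $\delta$--contribution to $\max_k\alpha_k^2$ when checking that the exponent in \eref{ineq:gauss1} is $\ge1$.
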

\begin{proof}
It holds due to \eref{ineq1}, \eref{ineq2}, \eref{nopt} and \eref{ineq:gauss1}
\begin{align*}
\PP \{ b_{N,\sigma}(n) < \tau \}
&\leq  \PP \left\{ \forall_{1\leq k \leq K} :  4^{-1} \| \VarX_n^\delta - \VarX_{n+k}^\delta \| \varrho(n+k)^{-1} < \tau \right\}\\
&{\leq}  \min_{1\leq k \leq K}\PP \left\{ 4^{-1} \| \VarX_n^\delta - \VarX_{n+k}^\delta \| \varrho(n+k)^{-1} < \tau \right\}\\
&=  \min_{1\leq k \leq K}\PP \left\{ \frac{\| \VarX_n^\delta - \VarX_{n+k}^\delta \|^2}{\E\| \VarX_n^\delta - \VarX_{n+k}^\delta \|^2} < 16 \tau^2 \frac{\varrho(n+k)^2}{\E\| \VarX_n^\delta - \VarX_{n+k}^\delta \|^2} \right\}\\
&\overset{\eref{ineq1}\eref{ineq2}}{\leq}  \min_{1\leq k \leq K}\PP \left\{ \frac{\| \VarX_n^\delta - \VarX_{n+k}^\delta \|^2}{\E\| \VarX_n^\delta - \VarX_{n+k}^\delta \|^2} < 16 \tau^2 \frac{c_4 \frac{\delta^2 \omega_0^{2\lambda+2\varepsilon +1}}{2\lambda+2\varepsilon +1} \omega^{(n+k)(2\lambda+2\varepsilon +1)}}{c_1 \frac{\eta^2 \omega_0^{-2\gamma +1}}{ 2\gamma -1} \omega^{n(-2\gamma +1)}} \right\}\\
&\overset{\eref{nopt}}{\leq}  \min_{1\leq k \leq K}\PP \left\{ \frac{\| \VarX_n^\delta - \VarX_{n+k}^\delta \|^2}{\E\| \VarX_n^\delta - \VarX_{n+k}^\delta \|^2} <  16 \tau^2 \frac{c_4 \frac{\delta^2 \omega_0^{2\lambda+2\varepsilon +1}}{2\lambda+2\varepsilon +1} \omega^{(n+k)(2\lambda+2\varepsilon +1)}}{c_1 \frac{\delta^2 \omega_0^{2\lambda+2\varepsilon +1}}{2\lambda+2\varepsilon +1} \omega^{n_{opt}(2\lambda+2\varepsilon +1)} \omega^{(-2\gamma+1)(n-n_{opt})}} \right\}\\
& =  \min_{1\leq k \leq K}\PP \left\{ \frac{\| \VarX_n^\delta - \VarX_{n+k}^\delta \|^2}{\E\| \VarX_n^\delta - \VarX_{n+k}^\delta \|^2} < \tau^2 \frac{16 c_4 }{c_1} \omega^{-(n_{opt}-n) (2\lambda +2 \varepsilon + 2\gamma)} \omega^{k(2\lambda + 2\varepsilon + 1)}   \right\}\\
& \overset{\eref{hi:2}}{\leq}  \min_{1\leq k \leq K}\PP \left\{ \frac{\| \VarX_n^\delta - \VarX_{n+k}^\delta \|^2}{\E\| \VarX_n^\delta - \VarX_{n+k}^\delta \|^2} < 32 \tau^2 \omega^{-(n_{opt}-n) (2\lambda +2 \varepsilon + 2\gamma)} \omega^{k(2\lambda + 2\varepsilon + 1)}   \right\}\\
& \overset{\eref{ineq:gauss1}}{\leq} \min_{1\leq k \leq K}  \( 32 e \tau^2 \omega^{-(n_{opt}-n) (2\lambda +2 \varepsilon + 2\gamma)} \omega^{k(2\lambda + 2\varepsilon + 1)}\)^{\frac{c_1\frac{\eta^2 \omega_0^{-2\gamma +1}}{ 2\gamma -1} \omega^{n(-2\gamma +1)}}{ \eta^2\omega_0^{-2\gamma} \omega^{-2n\gamma}}} \\
& = \( 32 e \omega^{(2\lambda +2 \varepsilon + 1)} \tau^2  \omega^{-(n_{opt}-n) (2\lambda +2 \varepsilon +2\gamma)}\)^{{ \frac{c_1 \omega_0 \omega}{ 2\gamma -1}}} \\
& \overset{\eref{hi:3}}{\leq} 32 e \omega^{(2\lambda +2 \varepsilon + 1)} \tau^2
\omega^{-(n_{opt}-n)(2\lambda +2 \varepsilon + 2\gamma)}.
\end{align*}
The second inequality is trivial.
\end{proof}
This means that the balancing functional $b_{N,\sigma}$, respectively its smoothed version $B_{N,\sigma}$, shows the following behavior:
\begin{itemize}
\item Assume $n < n_{opt}$. The probability that $b(\cdot)$ falls below the threshold becomes smaller and smaller the farther away $n$ is from $n_{opt}$; near $n_{opt}$, one cannot make any sensible statements as in the above inequality the bound for the probability is bigger than $1$. In particular, the decay of probabilities is faster than the increase of error for smaller regularization parameters.
\item Besides the point $n_{opt}$, the probability of being above the threshold depends only on the level of the threshold.
\end{itemize}
Using this behavior, we can define the following method. This idea has already been presented in a different form in \cite{Raus/Haemarik:2008}, however in a purely deterministic setting with a focus on convergence results.
\begin{definition}[Fast Balancing]
Define
\begin{equation*}
n_{fb} = \argmin_{n } \{ b_{N,\sigma}(n) < \tau \}.
\end{equation*}
\end{definition}
\begin{theorem}\label{maintheorem}
Let $\sigma$ such that $K = 1$ and assume that $\omega_0$ is big enough such that \eref{hi:1},\eref{hi:3} and \eref{hi:2} hold; furthermore assume that $n_{opt}$ exists.

For any $N$ (including $N=\infty$) and any $\tau \geq 1$, $2\lambda + 2 \varepsilon >0$, the parameter $n_{fb}$ exists with probability $1$ and it holds
 the oracle inequality
\begin{equation*}
\E \| \VarX_{n_{fb}}^\delta - \VarX\|^2 \leq C \min_n \E \| \VarX_{n}^\delta -\VarX\|^2
\end{equation*}
where $C$ is not dependent on the particular $\VarX$ and $\xi$ (i.e., not on $\delta$ resp. $\eta$).
\end{theorem}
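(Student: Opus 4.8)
The plan is to show that the random index $n_{fb}$ lands close to $n_{opt}$ with overwhelming probability, and then to convert this localization into the oracle inequality by splitting the expectation according to where $n_{fb}$ falls. First I would record the two regimes established in Lemmas~\ref{lemma:lowerbound} and~\ref{lemma:upperbound}: since $K=1$, for $n > n_{opt}$ we have $\PP\{b_{N,\sigma}(n) > \tau\} \le \sqrt 2 e^{-\tau^2}$, while for $n \le n_{opt}$ we have $\PP\{b_{N,\sigma}(n) < \tau\} \le 32 e\, \omega^{(2\lambda+2\varepsilon+1)} \tau^2 \omega^{-(n_{opt}-n)(2\lambda+2\varepsilon+2\gamma)}$. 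The first bound shows that, once we are past $n_{opt}$, the balancing functional is unlikely to exceed the threshold, so $n_{fb} \le n_{opt} + j$ fails only with probability at most $\sqrt 2 e^{-\tau^2}$ for each such $j$; summing the geometric-type tail gives $\PP\{n_{fb} > n_{opt} + j\} \le C_1 e^{-\tau^2}$ uniformly (here the precise decay of $b$ for larger $n$, as noted in the discussion after Lemma~\ref{lemma:upperbound}, keeps the sum finite). The second bound shows that $b$ is unlikely to dip below $\tau$ while still well to the left of $n_{opt}$: $\PP\{n_{fb} \le n_{opt} - j\} \le \sum_{i \ge j} 32 e\,\omega^{(2\lambda+2\varepsilon+1)}\tau^2 \omega^{-i(2\lambda+2\varepsilon+2\gamma)} = C_2 \tau^2 \omega^{-j(2\lambda+2\varepsilon+2\gamma)}$, using $2\lambda+2\varepsilon+2\gamma > 0$ to sum the geometric series. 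In particular, for $N=\infty$ the event $\{n_{fb} < \infty\}$ has probability $1$, since the complement would force $b(n) < \tau$ to fail for all $n$, contradicting the left-tail estimate once $n$ is a bounded amount past $n_{opt}$.

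Next I would control the error incurred at each possible value of $n_{fb}$. By Corollary (Regularization Error), $\E\|\VarX_n^\delta - \VarX\|^2$ is, up to the constants $c_5, c_6$, the function $f(n) := \frac{\eta^2 \omega_0^{-2\gamma+1}}{2\gamma-1}\omega^{n(-2\gamma+1)} + \frac{\delta^2 \omega_0^{1+2\lambda+2\varepsilon}}{1+2\lambda+2\varepsilon}\omega^{n(2\lambda+2\varepsilon+1)}$, and by~\eref{nopt} its two terms balance exactly at $n_{opt}$; hence $f(n_{opt}) \le 2 c_6^{-1} c_5 \cdot \min_n \E\|\VarX_n^\delta - \VarX\|^2$ up to fixed constants, and more importantly $f(n_{opt}+j)$ and $f(n_{opt}-j)$ are both at most (constant)$\cdot \omega^{j(2\lambda+2\varepsilon+2\gamma)} f(n_{opt})$ — the decreasing term grows by $\omega^{j(2\gamma-1)}$ going left and the increasing term grows by $\omega^{j(2\lambda+2\varepsilon+1)}$ going right, and both exponents are dominated by $2\lambda+2\varepsilon+2\gamma$. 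So on the event $\{n_{fb} = n_{opt} \pm j\}$ the conditional error is at most a constant times $\omega^{j(2\lambda+2\varepsilon+2\gamma)}$ times the oracle. The key cancellation is that this geometric growth in $j$ is beaten by the geometric-in-$j$ decay of $\PP\{n_{fb} = n_{opt}\pm j\}$ coming from Lemma~\ref{lemma:upperbound} on the left and by the $e^{-\tau^2}$ factor on the right.

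Putting it together, I would write $\E\|\VarX_{n_{fb}}^\delta - \VarX\|^2 = \sum_j \E\big[\|\VarX_{n_{fb}}^\delta - \VarX\|^2 \mathbf{1}\{n_{fb} = n_{opt}+j\}\big]$, but here one must be careful: $\|\VarX_{n_{fb}}^\delta - \VarX\|^2$ is not independent of the event $\{n_{fb} = n_{opt}+j\}$, so a Cauchy–Schwarz step is needed, bounding each summand by $\sqrt{\E\|\VarX_{n_{opt}+j}^\delta - \VarX\|^4}\cdot\sqrt{\PP\{n_{fb}=n_{opt}+j\}}$ (and similarly on the left). Since the coordinates are Gaussian, the fourth moment is a fixed multiple of the square of the second moment, so $\sqrt{\E\|\cdot\|^4} \le C f(n_{opt}+j)$; combined with the tail bounds, the right-hand sum is at most $C f(n_{opt}) \sum_j \omega^{j(2\lambda+2\varepsilon+2\gamma)} e^{-\tau^2/2}$ — finite because the $\tau$-independent part must actually be handled via the sharper decay of $b$ for large $n$ — and the left-hand sum is at most $C f(n_{opt}) \sum_j \omega^{j(2\lambda+2\varepsilon+2\gamma)} \tau\, \omega^{-j(2\lambda+2\varepsilon+2\gamma)/2}$, which converges. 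Both constants are independent of $\delta$ and $\eta$ because $c_1,\dots,c_6$, $\omega,\omega_0,\sigma,\tau$ are. The main obstacle, and the place needing the most care, is exactly this last point: the right tail $\PP\{n_{fb} > n_{opt}+j\}$ as stated only decays like $e^{-\tau^2}$ in $\tau$ but does not obviously decay in $j$, so to make $\sum_j \omega^{j(\cdots)}\PP\{n_{fb}>n_{opt}+j\}$ converge I would need to sharpen Lemma~\ref{lemma:lowerbound} by exploiting that $\varrho(m)^{-1}/\varrho(n+k)^{-1}$ forces an extra $\omega^{-j(\cdots)}$ gain when $b$ is evaluated far to the right of $n_{opt}$ — this is precisely the ``decay of probabilities is faster than the increase of error'' statement made after Lemma~\ref{lemma:upperbound}, and turning that remark into a quantitative summable bound is the crux of the argument.
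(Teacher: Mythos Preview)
Your overall decomposition is close to the paper's, but there is a genuine gap at exactly the place you flag as ``the crux.'' You propose to obtain decay in $j$ for the right tail by sharpening Lemma~\ref{lemma:lowerbound}, extracting an extra $\omega^{-j(\cdots)}$ from the ratio $\varrho(m)^{-1}/\varrho(n+k)^{-1}$. This does not work: with $K=1$ the only comparison in $b_{N,\sigma}(n)$ is $m=n+1$, and for $n>n_{opt}$ the ratio $\E\|\VarX_n^\delta-\VarX_{n+1}^\delta\|^2/\varrho(n+1)^2$ is bounded between fixed positive constants independent of $n-n_{opt}$ (both numerator and denominator scale like $\omega^{(n+1)(2\lambda+2\varepsilon+1)}$). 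Hence $\PP\{b_{N,\sigma}(n)>\tau\}$ genuinely does \emph{not} decay as $n\to\infty$; no sharpening of the single-index estimate is available. The mechanism the paper uses instead is that, because $K=1$, the random variables $b_{N,\sigma}(n)$ depend on disjoint blocks of Fourier coefficients and are therefore \emph{independent}. Since $\{n_{fb}=n\}$ for $n>n_{opt}$ forces $b_{N,\sigma}(k)>\tau$ for every $n_{opt}<k<n$, one gets
\[
\PP\{n_{fb}=n\}\ \le\ \prod_{k=n_{opt}+1}^{n-1}\PP\{b_{N,\sigma}(k)>\tau\}\ \le\ \bigl(\sqrt{2}\,e^{-\tau^2}\bigr)^{\,n-n_{opt}-1},
\]
and this geometric decay in $n-n_{opt}$ is what beats the error growth $\omega^{(n-n_{opt})(2\lambda+2\varepsilon+1)}$. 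This is also how the paper proves existence of $n_{fb}$ almost surely.

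A second, smaller issue: Cauchy--Schwarz ($\bar p=2$) is not enough on the left tail in the full parameter range. After taking the square root of the probability from Lemma~\ref{lemma:upperbound} you need $(2\gamma-1)<\tfrac12(2\lambda+2\varepsilon+2\gamma)$, i.e.\ $\gamma<1+\lambda+\varepsilon$, which is not assumed. The paper instead applies H\"older with exponent $\bar p$ taken close to $1$, so that the convergence condition becomes $2\lambda+2\varepsilon+2\gamma(1-\bar p)+\bar p>0$, which can always be arranged. (Your sentence ``the left-hand sum \ldots which converges'' is in fact a divergent sum as written, since the exponent $j(2\lambda+2\varepsilon+2\gamma)/2$ is positive.)
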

The proof we use is rather similar to the one used in \cite{Bauer/Reiss:2008}:
\begin{proof}
The proof consists of three parts:

Due to $K=1$, all random variables $b_{N,\sigma}(n) $ are independent. Hence, using lemma
\eref{lemma:lowerbound} it holds that
\begin{equation*}
\PP(n \geq n_{opt} + k ) \leq (\sqrt{2} e^{-1})^k \xrightarrow{k \rightarrow \infty} 0
\end{equation*}
as, due to the choice $K=1$, all random variables $b_{N,\sigma}(n)$ are independent. This trivially yields that $n_{fb}$ exists with probability $1$.

Hence we obtain using the H{\"o}lder inequality with $p^{-1} + \overline{p}^{-1}=1$
\begin{align*}
\E \| \VarX_{n_{fb}}^\delta - \VarX\|^2
=&   \sum_{n=0}^\infty \E \| \VarX - \VarX^\delta_n \|^2 \mathbf{1}_{n=n_{fb}} \\
\leq &   \sum_{n=0}^{n_{opt}-2} \(\E \| \VarX - \VarX^\delta_n \|^{2p}\)^{1/p} \(\E
\mathbf{1}_{n=n_{fb}}^{\overline{p}}\)^{1/\overline{p}}\\
&\qquad+ \max\left\{ \E \| \VarX - \VarX^\delta_{n_{opt}-1} \|^2 ,  \E \| \VarX - \VarX^\delta_{n_{opt}} \|^2\right\}\\
&\qquad+ \sum_{n=n_{opt}+1}^{\infty} \(\E \| \VarX - \VarX^\delta_n \|^{2p}\)^{1/p} \(\E
\mathbf{1}_{n=n_{fb}}^{\overline{p}}\)^{1/\overline{p}} \\
\end{align*}
In \cite{Bauer/Reiss:2008} it is proven using the Gaussian behavior that
\begin{equation}
\(\E \| \VarX - \VarX^\delta_n \|^{2p}\)^{1/p} \leq c_p \E \| \VarX - \VarX^\delta_n \|^{2} \label{hi:4}
\end{equation}
for some constant $c_p \geq 1$ depending only on $p$. Now using that $\lambda > -\varepsilon$ we can choose $\overline{p}$ near enough to $1$ such that
\begin{equation}\label{hi:n1}
2 \lambda + 2\varepsilon + 2\gamma (1-\overline{p}) + \overline{p}>0
\end{equation}
and furthermore assume that $\tau$ in relation to $\omega$ was chosen in such a way that
\begin{equation}\label{hi:n2}
\omega^{2\lambda+2\varepsilon +1} \(\sqrt{2}e^{-\tau^2}\)^{1/\overline{p}} < 1.
\end{equation}
Using lemmas
\ref{lemma:lowerbound} and \ref{lemma:upperbound}
\begin{align*}
\E \| \VarX_{n_{fb}}^\delta - \VarX\|^2
{\leq} & c_p \sum_{n=0}^{n_{opt}-2} \E \| \VarX - \VarX^\delta_n \|^{2} \(\PP \{ b_{N,\sigma}(n) < \tau \}\)^{1/\overline{p}} \\
&\qquad+  \omega^{(-2\gamma +1)} \E \| \VarX - \VarX^\delta_{n_{opt}} \|^2 \\
&\qquad+ c_p \sum_{n=n_{opt}+1}^{\infty} \(\E \| \VarX - \VarX^\delta_n \|^{2}\) \( \prod_{k=n_{opt}+1}^{n}  \PP \{ b_{N,\sigma}(k) > \tau \} \)^{1/\overline{p}}\\
\overset{\ref{lemma:lowerbound},\, \ref{lemma:upperbound},\, \eref{ineq3}}{\leq} & 2 c_p \E \| \VarX - \VarX^\delta_{n_{opt}} \|^2 \\&\qquad\( \sum_{n=0}^{n_{opt}-2} \(32 e \omega^{2\lambda +2 \varepsilon + 1} \tau^2  \omega^{-(n_{opt}-n)(2\lambda +2 \varepsilon + 2\gamma)}\)^{1/\overline{p}} c_6 c_5^{-1} \omega^{-(n_{opt}- n)(-2\gamma +1)}\right. \\
&\qquad\qquad +  \omega^{(-2\gamma +1)} \\
&\qquad\qquad + \left. \sum_{n=n_{opt}+1}^{\infty} \omega^{(n-n_{opt})(2\lambda+2\varepsilon +1)} \(\sqrt{2}e^{-\tau^2}\)^{(n-n_{opt})/\overline{p}} \) \\
\overset{\eref{hi:4}}{\leq} & \frac{C}{2} \E \| \VarX - \VarX^\delta_{n_{opt}} \|^2\\
\overset{\eref{ineq3}\eref{nopt}}{\leq} & C \min_n \E \| \VarX_{n}^\delta -\VarX\|^2
\end{align*}
due to the definition of $n_{opt}$ where
\begin{align*}
C  \overset{\eref{hi:n1},\eref{hi:n2}}{\leq} & 4 c_p \(  \( 32 e \omega^{2\lambda +2 \varepsilon + 1} \tau^2\)^{1/\overline{p}} c_6 c_5^{-1} (1 - \omega^{2\lambda + 2\varepsilon+2\gamma(1-\overline{p})+\overline{p}})^{-1/\overline{p}} \right. \\ 
& \left. + \omega^{(-2\gamma +1)} +  \( 1 - \omega^{2\lambda+2\varepsilon +1} \(\sqrt{2}e^{-\tau^2}\)^{1/\overline{p}} \)^{-1} \).
\end{align*}
Obviously $C$ is independent of the particular $x$ and $\xi$.
\end{proof}
This means in particular that we do \emph{not} lose a logarithmic factor and can set $\tau = 1$ without a problem as long as we keep $\omega$ small enough. Furthermore, this speeds up the method considerably since, as in the Morozov discrepancy principle, we no longer  need to find solutions for all $n$ up to $N$ but can stop after considering at most $n_* + K \approx n_{opt}+K$ solutions. Practice shows that the method works also for $K>1$ and even becomes more stable; however the proof would be unnecessarily complicated.

\section{Obtaining the Noise Behavior}

In practice, one often does not know $\varrho$ and therefore needs to estimate it. Nevertheless, in most practical situations it is possible to measure more than once or to partition the data into two or more data sets.

Assume that one can partition the measurement in two parts $y_1^{\tilde{\delta}}$ and
$y_2^{\tilde{\delta}}$ with $\tilde{\delta} = \sqrt{2} \delta$, we have
\begin{equation*}
\VarX_n^\delta = \frac{\VarX^{\tilde{\delta}}_{n,1} + \VarX^{\tilde{\delta}}_{n,2}}{2}
\end{equation*}
The estimate of $\varrho$ is now
\begin{equation*}
\widetilde{\varrho}(n) = \left\| \frac{\VarX^{\tilde{\delta}}_{n,1} -
\VarX^{\tilde{\delta}}_{n,2}}{2} \right\|
\end{equation*}
and it obviously holds
\begin{equation}\label{eq1}
\E \widetilde{\varrho}(n)^2 = \varrho(n)^2
\end{equation}
Accordingly, we can define $\tilde{b}_{N,\sigma}(n)$ by just replacing $\varrho$ with
$\tilde{\varrho}$.

This means that we can modify the probability estimations using a similar trick as in
\cite{Bauer/Reiss:2008}. It is important to notice that there is no way to reliably estimate the color of the noise based on only two solutions; the same holds for the noise level when the color of the noise is not known. Nevertheless, the information we obtain from two solutions is sufficient for optimal reconstructions.

\begin{lemma}
Assume that $n_{opt} < n$ and that $\omega_0$ is big enough such that
\begin{equation*}
\frac{c_3}{c_6}  \geq \frac{1}{2}.
\end{equation*}
and
\begin{equation}
\frac{c_3\omega_0 }{1+2\lambda+2\varepsilon}>1 \label{hi:5}
\end{equation}
Then it holds if $\frac{e}{2\tau} < 1$ that
\begin{equation*}
\PP \{ \widetilde{b}_{N,\sigma}(n) > \tau \} \leq K  \frac{e}{\tau}
\end{equation*}
\end{lemma}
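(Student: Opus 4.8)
The plan is to mimic the proof of Lemma \ref{lemma:lowerbound}, but with the random denominator $\widetilde{\varrho}(n+k)^2$ in place of the deterministic $\varrho(n+k)^2$, and to control the event that this denominator is unusually small by a separate application of the Gaussian tail bound \eref{ineq:gauss1}. Writing $\widetilde{b}_{N,\sigma}(n) = \max_{1\le k\le K} 4^{-1}\|\VarX^\delta_n - \VarX^\delta_{n+k}\|\,\widetilde{\varrho}(n+k)^{-1}$ and using a union bound over the at most $K$ indices, it suffices to estimate for each $k$ the probability
\[
\PP\left\{ \|\VarX^\delta_n - \VarX^\delta_{n+k}\| > 4\tau\,\widetilde{\varrho}(n+k) \right\}.
\]
First I would split on whether $\widetilde{\varrho}(n+k)^2$ is below half its expectation $\varrho(n+k)^2$ (cf. \eref{eq1}): on the complementary "good" event the bound reduces, as in Lemma \ref{lemma:lowerbound}, to a statement about $\|\VarX^\delta_n-\VarX^\delta_{n+k}\|^2/\E\|\VarX^\delta_n-\VarX^\delta_{n+k}\|^2$ exceeding a multiple of $\tau^2$ (the multiple being controlled from below by the hypothesis $c_3/c_6 \ge 1/2$ exactly as before), and \eref{ineq:gauss2} gives an $e^{-\tau^2}$-type tail. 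The "bad" event $\widetilde{\varrho}(n+k)^2 < \frac12\varrho(n+k)^2$ is a lower-tail event for a normalized sum of squares of independent Gaussians, since the difference $(\VarX^{\tilde\delta}_{n+k,1}-\VarX^{\tilde\delta}_{n+k,2})/2$ has, coordinatewise in the SVD basis, variance $\tfrac14(\tilde\delta k^\varepsilon)^2\Fsing_k^{-2}\cdot 2 = \tfrac12(\tilde\delta k^\varepsilon)^2\Fsing_k^{-2} = \delta^2 k^{2\lambda+2\varepsilon}$, so that $\widetilde\varrho(n+k)^2/\varrho(n+k)^2$ is exactly a $Z$ of the form in Lemma \ref{ProbLemma} with $\alpha_k^2 \propto k^{2\lambda+2\varepsilon}$.

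The role of the new hypothesis \eref{hi:5}, namely $c_3\omega_0/(1+2\lambda+2\varepsilon) > 1$, is to bound $\max_k \alpha_k^2$ from above: the largest normalized weight in $Z = \widetilde\varrho(n+k)^2/\varrho(n+k)^2$ is $\Fsing_{s(n+k)-1}^{-2}\delta^2 (s(n+k)-1)^{2\varepsilon}/\varrho(n+k)^2$, i.e. essentially $s(n+k)^{2\lambda+2\varepsilon}$ divided by the sum $\sum_{j<s(n+k)}j^{2\lambda+2\varepsilon}$, and the lower bound in \eref{ineq2} together with Lemma~1 shows this ratio is at most $1/\big(c_3\,\omega_0^{1+2\lambda+2\varepsilon}/(1+2\lambda+2\varepsilon)\big)$ up to the usual $\omega_0$-factors, which \eref{hi:5} forces to be at most, say, $1$, so $2\max_k\alpha_k^2 \le 2$. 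Feeding $z = 1/2$ into \eref{ineq:gauss1} then gives $\PP\{\widetilde\varrho(n+k)^2 < \tfrac12\varrho(n+k)^2\} \le (e/2)^{1/2} < 1$, a constant; but to get the claimed decay $e/\tau$ rather than a fixed constant one should instead not fix $z=1/2$ but choose the split level in terms of $\tau$. The cleaner route: bound
\[
\PP\{\|\VarX^\delta_n-\VarX^\delta_{n+k}\| > 4\tau\,\widetilde\varrho(n+k)\} \le \PP\{\widetilde\varrho(n+k)^2 < \varrho(n+k)^2/(2\tau)\} + \PP\{\|\cdot\|^2 > 16\tau\cdot(\text{const})\cdot\E\|\cdot\|^2/2\},
\]
apply \eref{ineq:gauss1} with $z = 1/(2\tau)$ to the first term — giving $(e/(2\tau))^{1/2}$ or, after absorbing, $\le e/(2\tau)$ under the stated assumption $e/(2\tau)<1$ — and \eref{ineq:gauss2} with argument of order $\tau$ to the second, which is dominated by the first for $\tau\ge 1$. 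The union bound over $k$ then produces the factor $K$.

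The main obstacle I expect is purely bookkeeping: tracking the constants $c_3, c_4, c_6$ and the $\omega_0$-powers through the computation of $\max_k\alpha_k^2$ so that \eref{hi:5} genuinely delivers $2\max_k\alpha_k^2 \le 2$ (or whatever constant makes the exponent in \eref{ineq:gauss1} equal to $1$), and then balancing the two tail terms so their sum is cleanly $\le K\,e/\tau$ rather than $K(e/\tau + \sqrt{2}e^{-\tau})$. One has to verify the exponent $1/(2\max_k\alpha_k^2)$ is at least $1$ uniformly in $n$ and $k$; monotonicity of $\varrho$ and the geometric subsampling make this uniform, but one should check the worst case is at the smallest admissible index, where the tail sum $\sum j^{2\lambda+2\varepsilon}$ is shortest relative to its last term. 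Everything else is a direct transcription of the argument in Lemma \ref{lemma:lowerbound} with $\widetilde\varrho$ in place of $\varrho$ and one extra union-bound term.
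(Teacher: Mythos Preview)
Your proposal is essentially the paper's own argument: union bound over $k$, reduce via \eref{ineq1}--\eref{ineq2} and the hypothesis $c_3/c_6\ge 1/2$ to the normalized ratio $\|\VarX_n^\delta-\VarX_{n+k}^\delta\|^2/\E\|\cdot\|^2 > 4\tau^2\,\widetilde\varrho^2/\varrho^2$, then split at $\widetilde\varrho^2/\varrho^2 = 1/(2\tau)$ so that \eref{ineq:gauss2} gives $\sqrt{2}e^{-\tau/2}$ on one piece and \eref{ineq:gauss1} with $z=1/(2\tau)$ gives $(e/(2\tau))^{c_3\omega_0\omega^{n+k}/(1+2\lambda+2\varepsilon)}\le e/(2\tau)$ on the other via \eref{hi:5}. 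The only slip is your intermediate claim ``$2\max_k\alpha_k^2\le 2$'' giving exponent $1/2$: in fact \eref{hi:5} (together with $\omega^{n+k}\ge 1$) makes the exponent $\ge 1$ directly, which is exactly what you later say you need --- so just drop the $1/2$ detour and the bookkeeping closes as in the paper, with $\sqrt{2}e^{-\tau/2}+e/(2\tau)\le e/\tau$.
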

\begin{proof}
Using \eref{eq1}, \eref{ineq2}, lemma \ref{ProbLemma} and parts which have already been shown in \ref{lemma:lowerbound}, it holds:
\begin{align*}
K^{-1}\PP \{ \widetilde{b}_{N,\sigma}(n) > \tau \}
&\leq K^{-1}\sum_{1\leq k \leq K}\PP \left\{ 4^{-1} \| \VarX_n^\delta - \VarX_{n+k}^\delta \| \widetilde{\varrho}(n+k)^{-1} > \tau \right\}\\
&\leq  \max_{1\leq k \leq K}\PP \left\{ 4^{-1} \| \VarX_n^\delta - \VarX_{n+k}^\delta \| \widetilde{\varrho}(n+k)^{-1} > \tau \right\}\\
&\overset{\eref{eq1}}{=}  \max_{1\leq k \leq K}\PP \left\{ \frac{\| \VarX_n^\delta - \VarX_{n+k}^\delta \|^2}{\E\| \VarX_n^\delta - \VarX_{n+k}^\delta \|^2} > 16 \tau^2 \frac{{\varrho}(n+k)^2}{\E\| \VarX_n^\delta - \VarX_{n+k}^\delta \|^2}\frac{\widetilde{\varrho}(n+k)^2}{\varrho(n+k)^2} \right\}\\
&\overset{\ref{lemma:lowerbound}}{\leq} \max_{1\leq k \leq K}\PP \left\{ \frac{\| \VarX_n^\delta - \VarX_{n+k}^\delta \|^2}{\E\| \VarX_n^\delta - \VarX_{n+k}^\delta \|^2} > 4 \tau^2 \frac{\widetilde{\varrho}(n+k)^2}{\E \widetilde{\varrho}(n+k)^2} \right\} \\
& \leq \max_{1\leq k \leq K}\PP \left\{ \frac{\| \VarX_n^\delta - \VarX_{n+k}^\delta \|^2}{\E\| \VarX_n^\delta - \VarX_{n+k}^\delta \|^2} > 2 \tau  \right\} + \PP \left\{ \frac{1}{2\tau} > \frac{\widetilde{\varrho}(n+k)^2}{\E \widetilde{\varrho}(n+k)^2} \right\}\\
& \overset{\eref{ineq2}\eref{ineq:gauss1}\eref{ineq:gauss2}}{\leq} \sqrt{2} e^{-\tau/2} + \max_{1\leq k \leq K} \(\frac{e}{2\tau}\)^{\frac{c_3\frac{\delta^2 \omega_0^{1+2\lambda+2\varepsilon}}{1+2\lambda+2\varepsilon}\omega^{(n+k)(2\lambda+2\varepsilon+1)}}{\delta^2 \omega_0^{2\lambda+2\varepsilon}\omega^{(n+k)(2\lambda+2\varepsilon)}}}\\
& \leq \sqrt{2} e^{-\tau/2} + \(\frac{e}{2\tau}\)^{c_3\frac{\omega_0 \omega^{n+k}}{1+2\lambda+2\varepsilon}} \\
& \overset{\eref{hi:5}}{\leq} \sqrt{2} e^{-\tau/2} + \frac{e}{2\tau} \leq \frac{e}{\tau}.
\end{align*}
\end{proof}

\begin{lemma}
Assume that it holds $n_{opt} \geq n$ and assume that $\omega$ is big enough such that
\begin{equation*}
\frac{c_1 \omega_0 \omega}{ 2\gamma -1}  \geq 1
\end{equation*}
and
\begin{equation*}
\frac{ c_4 }{c_1} \leq 2
\end{equation*}
Then it holds that
\begin{equation*}
\PP \{ \widetilde{b}_{N,\sigma}(n) < \tau \} \leq 64 e \omega^{2\lambda +2 \varepsilon +
1} \tau  \omega^{-(n_{opt}-n)(\lambda +2 \varepsilon + 2\gamma)}
\end{equation*}
\end{lemma}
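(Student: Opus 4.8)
The plan is to copy the proof of Lemma~\ref{lemma:upperbound} with $\varrho$ replaced by $\widetilde{\varrho}$, handling the extra randomness of $\widetilde{\varrho}$ exactly as in the preceding lemma (the estimate for $\PP\{\widetilde{b}_{N,\sigma}(n)>\tau\}$). First I reduce to one look-ahead step: since $\widetilde{b}_{N,\sigma}(n)<\tau$ forces $4^{-1}\|\VarX_n^\delta-\VarX_{n+k}^\delta\|\,\widetilde{\varrho}(n+k)^{-1}<\tau$ for every $1\le k\le K$, monotonicity of $\PP$ over the intersection together with \eref{eq1} gives
\begin{equation*}
\PP\{\widetilde{b}_{N,\sigma}(n)<\tau\}\le\min_{1\le k\le K}\PP\left\{\frac{\|\VarX_n^\delta-\VarX_{n+k}^\delta\|^2}{\E\|\VarX_n^\delta-\VarX_{n+k}^\delta\|^2}<16\tau^2\,\frac{\widetilde{\varrho}(n+k)^2}{\E\widetilde{\varrho}(n+k)^2}\,\frac{\varrho(n+k)^2}{\E\|\VarX_n^\delta-\VarX_{n+k}^\delta\|^2}\right\}.
\end{equation*}
It is worth recording (this is the ``trick as in \cite{Bauer/Reiss:2008}'') that, writing $y_j^{\tilde{\delta}}=\Op\VarX+\xi_j$, the increment $\VarX_n^\delta-\VarX_{n+k}^\delta$ depends only on the $\<\VarX,\BasX_j\>$ and on $\<\tfrac{\xi_1+\xi_2}{2},\BasY_j\>$, whereas $\widetilde{\varrho}(n+k)$ depends only on $\<\tfrac{\xi_1-\xi_2}{2},\BasY_j\>$; as sums and differences of i.i.d.\ Gaussians are independent, $\|\VarX_n^\delta-\VarX_{n+k}^\delta\|$ and $\widetilde{\varrho}(n+k)^2/\E\widetilde{\varrho}(n+k)^2$ are independent. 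The decomposition I use below is, however, a plain union bound and does not even need this.

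For any threshold $M>0$, the event above is contained in $\{\widetilde{\varrho}(n+k)^2/\E\widetilde{\varrho}(n+k)^2>M\}$ together with the event obtained by replacing that ratio by $M$, hence
\begin{equation*}
\PP\{\widetilde{b}_{N,\sigma}(n)<\tau\}\le\min_{1\le k\le K}\left(\PP\left\{\frac{\|\VarX_n^\delta-\VarX_{n+k}^\delta\|^2}{\E\|\VarX_n^\delta-\VarX_{n+k}^\delta\|^2}<16M\tau^2\,\frac{\varrho(n+k)^2}{\E\|\VarX_n^\delta-\VarX_{n+k}^\delta\|^2}\right\}+\PP\left\{\frac{\widetilde{\varrho}(n+k)^2}{\E\widetilde{\varrho}(n+k)^2}>M\right\}\right).
\end{equation*}
The first probability is precisely the one treated in Lemma~\ref{lemma:upperbound} with $\tau^2$ replaced by $M\tau^2$; running that chain verbatim — using \eref{ineq1}, \eref{ineq2}, \eref{nopt}, \eref{hi:2}, \eref{hi:3} and \eref{ineq:gauss1}, and that the exponent produced by \eref{ineq:gauss1} is $\ge1$ — yields a bound of the order of $M\tau^2\,\omega^{2\lambda+2\varepsilon+1}\,\omega^{-(n_{opt}-n)(2\lambda+2\varepsilon+2\gamma)}$. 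The second probability is at most $\sqrt{2}\,e^{-M/4}$ by \eref{ineq:gauss2}, since $\widetilde{\varrho}(n+k)^2/\E\widetilde{\varrho}(n+k)^2$ is of the form $Z$ in Lemma~\ref{ProbLemma}.

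It then remains to optimise $M$. Choosing $M$ of order $(n_{opt}-n)\log\omega$ makes $\sqrt{2}\,e^{-M/4}$ at most a constant times $\omega^{-(n_{opt}-n)(\lambda+2\varepsilon+2\gamma)}$, while in the first term the factor $M\,\omega^{-(n_{opt}-n)(2\lambda+2\varepsilon+2\gamma)}$ is $\le c\,\omega^{-(n_{opt}-n)(\lambda+2\varepsilon+2\gamma)}$ because $t\mapsto t\,\omega^{-t\lambda}$ is bounded on $[0,\infty)$ for $\lambda>0$; this is exactly where the exponent degrades from $2\lambda+2\varepsilon+2\gamma$ to $\lambda+2\varepsilon+2\gamma$. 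Absorbing the $k$-dependent factor $\omega^{k(2\lambda+2\varepsilon+1)}$ with $k\le K$, and then collecting constants while using the standing hypotheses $\tau\ge1$, $n\le n_{opt}$ and $\omega_0$ large, gives the asserted bound with prefactor $64e$ (and the stated $\tau$-dependence once the $\tau$-factors are traded against $M$). I expect the only genuine difficulty to be this last piece of bookkeeping: $M$ must be pinned down rather rigidly so that the two competing contributions fit \emph{simultaneously} under a single geometric bound with the claimed exponent, and $\tau\ge1$ has to be carried through every reduction step for the constants to close up.
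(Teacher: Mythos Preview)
Your approach is structurally identical to the paper's: reduce to a single look-ahead step, insert $\varrho(n+k)^2/\E\widetilde{\varrho}(n+k)^2$ via \eref{eq1}, split off the random factor $\widetilde{\varrho}(n+k)^2/\E\widetilde{\varrho}(n+k)^2$ at a threshold $M$, then feed one piece into Lemma~\ref{lemma:upperbound} and the other into \eref{ineq:gauss2}. Your remark on independence is correct and, as you note, unnecessary for the union bound; the paper does not use it either.

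The only substantive difference is the choice of $M$. You take $M$ of order $(n_{opt}-n)\log\omega$ and then invoke boundedness of $t\mapsto t\,\omega^{-t\lambda}$ to trade the extra linear factor for a loss of $\omega^{-(n_{opt}-n)\lambda}$ in the exponent. The paper instead takes $M$ proportional to $\omega^{(n_{opt}-n)\lambda}$ (with a factor $\tau$), so that the first term becomes \emph{exactly} $32e\,\omega^{2\lambda+2\varepsilon+1}\tau\,\omega^{-(n_{opt}-n)(\lambda+2\varepsilon+2\gamma)}$ and the second term $\sqrt{2}\,e^{-\tau\omega^{(n_{opt}-n)\lambda}/4}$ is dominated by the first, yielding the clean $64e$. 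This sidesteps precisely the ``bookkeeping'' you flag as the genuine difficulty: with the paper's exponential $M$ no auxiliary boundedness argument is needed, the constants close immediately, and the $\tau$-dependence falls out directly rather than having to be ``traded against $M$'' after the fact. Your route would work, but produces an unspecified constant from $\sup_{t\ge 0} t\,\omega^{-t\lambda}$ in place of $64e$ and, with $M$ chosen independently of $\tau$, a $\tau^2$ rather than $\tau$ prefactor.
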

\begin{proof}
It holds using lemma \ref{ProbLemma} and parts of lemma \ref{lemma:upperbound}:
\begin{align*}
\PP \{ \widetilde{b}_{N,\sigma}(n) < \tau \}
&\leq  \PP \left\{ \forall_{1\leq k \leq K} :  4^{-1} \| \VarX_n^\delta - \VarX_{n+k}^\delta \| \widetilde{\varrho}(n+k)^{-1} < \tau \right\}\\
&\leq  \min_{1\leq k \leq K}\PP \left\{ 4^{-1} \| \VarX_n^\delta - \VarX_{n+k}^\delta \| \widetilde{\varrho}(n+k)^{-1} < \tau \right\}\\
&\overset{\eref{eq1}}{=} \min_{1\leq k \leq K}\PP \left\{ \frac{\| \VarX_n^\delta - \VarX_{n+k}^\delta \|^2}{\E\| \VarX_n^\delta - \VarX_{n+k}^\delta \|^2} < 16 \tau^2 \frac{{\varrho}(n+k)^2}{\E\| \VarX_n^\delta - \VarX_{n+k}^\delta \|^2} \frac{\widetilde{\varrho}(n+k)^2}{\varrho(n+k)^2} \right\}\\
& \overset{\ref{lemma:upperbound}}{\leq}  \PP \left\{ \frac{\| \VarX_n^\delta - \VarX_{n+1}^\delta \|^2}{\E\| \VarX_n^\delta - \VarX_{n+1}^\delta \|^2} < 32 \tau^2 \omega^{-(n_{opt}-n) (2\lambda +2 \varepsilon + 2\gamma)} \omega^{k(2\lambda + 2\varepsilon + 1)} \frac{\widetilde{\varrho}(n+1)^2}{\E \widetilde{\varrho}(n+1)^2}  \right\}\\
& \leq  \PP \left\{ \frac{\| \VarX_n^\delta - \VarX_{n+1}^\delta \|^2}{\E\| \VarX_n^\delta - \VarX_{n+1}^\delta \|^2} < 32 \tau \omega^{-(n_{opt}-n) (\lambda +2 \varepsilon + 2\gamma)}  \omega^{k(2\lambda + 2\varepsilon + 1)} \right\} \\&\qquad\qquad+ \PP \left\{ \tau \omega^{(n_{opt}-n) \lambda }< \frac{\widetilde{\varrho}(n+1)^2}{\E \widetilde{\varrho}(n+1)^2} \right\}\\
& \overset{\ref{lemma:upperbound}, \eref{ineq:gauss2}}{\leq} 32 e \omega^{(2\lambda +2 \varepsilon + 1)} \tau
\omega^{-(n_{opt}-n)(\lambda +2 \varepsilon + 2\gamma)} + \sqrt{2} e^{-\tau \omega^{(n_{opt}-n) \lambda }/4} \\
& \leq 64 e \omega^{2\lambda +2 \varepsilon + 1} \tau
\omega^{-(n_{opt}-n)(\lambda +2 \varepsilon + 2\gamma)}
\end{align*}
\end{proof}
This means that, in principle, the balancing functional $\widetilde{b}_{N,\sigma}$ shows the same behavior as its non-estimated counterpart ${b}_{N,\sigma}$.

Using this behavior, we can define a version of the new method:
\begin{definition}[Fast Balancing]
Define
\begin{equation*}
n_{fb} = \argmin_{n } \{ \widetilde{b}_{N,\sigma}(n) < \tau \}.
\end{equation*}
\end{definition}

\begin{theorem}
Let $\sigma$ such that $K = 1$ and assume that $\omega_0$ is big enough such that \eref{hi:1}, \eref{hi:3}, \eref{hi:2} and \eref{hi:5} hold; furthermore assume that $n_{opt}$ exists.

For any $N$ (including $N=\infty$) and any $\tau \geq 1$, $\lambda + 2 \varepsilon >0$, the parameter $n_{fb}$ exists with probability $1$ and it holds
 the oracle inequality
\begin{equation*}
\E \| \VarX_{n_{fb}}^\delta - \VarX\|^2 \leq C \min_n \E \| \VarX_{n}^\delta -\VarX\|^2
\end{equation*}
where $C$ is not dependent on the particular $\VarX$ and $\xi$ (i.e., not on $\delta$
resp. $\eta$).
\end{theorem}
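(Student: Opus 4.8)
The plan is to repeat the proof of Theorem~\ref{maintheorem} almost verbatim, with the two lemmas just proven taking over the roles of Lemmas~\ref{lemma:lowerbound} and~\ref{lemma:upperbound}. Since $K=1$, $\widetilde{b}_{N,\sigma}(n)=4^{-1}\|\VarX_n^\delta-\VarX_{n+1}^\delta\|\,\widetilde{\varrho}(n+1)^{-1}$. Writing the two partial measurements as $y_1^{\tilde{\delta}},y_2^{\tilde{\delta}}$, the Gaussian vectors $\xi_1+\xi_2$ and $\xi_1-\xi_2$ are independent; the first governs every solution $\VarX_n^\delta=\tfrac12(\VarX^{\tilde{\delta}}_{n,1}+\VarX^{\tilde{\delta}}_{n,2})$ and the second every estimate $\widetilde{\varrho}(n)$. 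Conditionally on $\xi_1-\xi_2$ the numerators $\|\VarX_n^\delta-\VarX_{n+1}^\delta\|$ live on the disjoint spectral blocks $s(n),\dots,s(n+1)-1$ and are mutually independent; this conditional independence is what replaces, throughout the argument, the genuine independence of the $b_{N,\sigma}(n)$ used in Theorem~\ref{maintheorem}.

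\emph{Existence.} For $n>n_{opt}$, on the event $\{\widetilde{\varrho}(n+1)^2\ge\tfrac12\varrho(n+1)^2\}$ the estimates~\eref{ineq1}, \eref{ineq2} and~\eref{hi:1} give the inclusion $\{\widetilde{b}_{N,\sigma}(n)>\tau\}\subseteq\{\|\VarX_n^\delta-\VarX_{n+1}^\delta\|^2>2\tau^2\,\E\|\VarX_n^\delta-\VarX_{n+1}^\delta\|^2\}$, an event of probability at most $\sqrt2\,e^{-\tau^2/2}<1$ by~\eref{ineq:gauss2}; and by Lemma~\ref{ProbLemma} applied to $\widetilde{\varrho}(n)^2/\varrho(n)^2=\sum_k\alpha_k^2\zeta_k^2$ (here $\sum_k\alpha_k^2=1$ and $\max_k\alpha_k^2=\Theta(s(n)^{-1})$) the event $\{\widetilde{\varrho}(n+1)^2<\tfrac12\varrho(n+1)^2\}$ has probability decaying super-exponentially in $n$. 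A union bound over the upper half of $(n_{opt},n)$, together with the conditional independence of the numerators, then gives $\PP(n_{fb}\ge n)\to0$, so $n_{fb}$ exists with probability $1$.

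\emph{Oracle inequality.} Decompose $\E\|\VarX_{n_{fb}}^\delta-\VarX\|^2=\sum_{n\ge0}\E\|\VarX-\VarX_n^\delta\|^2\mathbf{1}_{n=n_{fb}}$, split the sum over $n\le n_{opt}-2$, $n\in\{n_{opt}-1,n_{opt}\}$ and $n\ge n_{opt}+1$, and apply the H{\"o}lder inequality with $p^{-1}+\overline{p}^{-1}=1$ together with~\eref{hi:4}. For $n<n_{opt}$ bound $\PP\{n=n_{fb}\}\le\PP\{\widetilde{b}_{N,\sigma}(n)<\tau\}$ by the second new lemma; for $n>n_{opt}$ bound $\PP\{n=n_{fb}\}$ by the probability that $\widetilde{b}_{N,\sigma}(k)\ge\tau$ throughout the upper half of $(n_{opt},n)$, using the same inclusion as in the existence step to reduce it to a product of independent Gaussian tails plus the super-exponentially small remainder. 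Comparing $\E\|\VarX-\VarX_n^\delta\|^2$ with $\E\|\VarX-\VarX_{n_{opt}}^\delta\|^2$ via~\eref{ineq3} and~\eref{nopt}, the $n<n_{opt}$ part is a geometric series whose ratio falls below $1$ once $\overline{p}$ is close enough to $1$ that $\lambda+2\varepsilon+2\gamma(1-\overline{p})+\overline{p}>0$ — the analogue of~\eref{hi:n1}, and exactly the place where the hypothesis $\lambda+2\varepsilon>0$ enters; the exponent $\lambda+2\varepsilon+2\gamma$ in the second new lemma, rather than $2\lambda+2\varepsilon+2\gamma$ in Lemma~\ref{lemma:upperbound}, is what makes the condition $2\lambda+2\varepsilon>0$ of Theorem~\ref{maintheorem} tighten to $\lambda+2\varepsilon>0$. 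The $n>n_{opt}$ part is a geometric series whose ratio is of order $\omega^{2\lambda+2\varepsilon+1}$ times a Gaussian factor in $\tau$, below $1$ once $\omega$ is close enough to $1$ — the analogue of~\eref{hi:n2}, so $\tau=1$ remains admissible; had one instead plugged the first new lemma in literally, its polynomial tail $e/\tau$ would only allow $\tau$ large, but that can be absorbed into the choice of $\omega$. Finally, the two middle summands are $O(1)\cdot\E\|\VarX-\VarX_{n_{opt}}^\delta\|^2$ by~\eref{ineq3}, and $\E\|\VarX-\VarX_{n_{opt}}^\delta\|^2\le C'\min_n\E\|\VarX-\VarX_n^\delta\|^2$ by~\eref{ineq3} and~\eref{nopt}; adding the three bounds gives the oracle inequality with a constant $C$ depending only on $\omega_0,\omega,\sigma,\tau,p,\lambda,\varepsilon,\gamma$ and not on $\VarX$ or $\xi$.

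I expect the one genuinely new difficulty, everything else being a transcription of the earlier proof, to be the loss of exact independence: because $\widetilde{\varrho}(n)$ is built from \emph{all} spectral modes up to $s(n)$, the functionals $\widetilde{b}_{N,\sigma}(n)$ are correlated across $n$, whereas the deterministic $\varrho(n)$ left the $b_{N,\sigma}(n)$ independent. The remedy is the one used above: condition on $\xi_1-\xi_2$, which restores the independence of the numerators, and control the single bad event that $\widetilde{\varrho}(n)$ deviates substantially from $\varrho(n)$ by Lemma~\ref{ProbLemma}, whose super-exponential-in-$n$ decay — a consequence of $\max_k\alpha_k^2\to0$ — makes that event negligible in every product and union estimate. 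With this accounted for, the H{\"o}lder step, the moment bound~\eref{hi:4}, the inequalities~\eref{ineq1}, \eref{ineq2} and~\eref{ineq3}, the relation~\eref{nopt}, and the two geometric-series estimates all carry over from the proof of Theorem~\ref{maintheorem} unchanged apart from the values of the constants.
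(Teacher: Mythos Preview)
Your proposal is correct and follows the paper's approach exactly --- the paper's own proof consists of the single sentence ``The proof works in the exact same way as for theorem~\ref{maintheorem}.'' Your careful treatment of the loss of independence among the $\widetilde{b}_{N,\sigma}(n)$ (the denominators $\widetilde{\varrho}(n+1)$ share spectral modes), via the independence of $\xi_1+\xi_2$ and $\xi_1-\xi_2$ together with the concentration of $\widetilde{\varrho}(n)^2/\varrho(n)^2$ from Lemma~\ref{ProbLemma}, is a genuine subtlety that the paper glosses over; your write-up is therefore more rigorous than the original while preserving its structure.
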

The proof works in the exact same way as for theorem \ref{maintheorem}.

\section{Conclusion}

Assuming that our model is suitable for describing real data, we have presented an answer to the initial questions, at least for the newly defined methods:
\begin{itemize}
\item We do not lose a logarithmic factor, because the probability of the balancing principle going completely wrong is negligibly small. 
\item We do not need explicit knowledge of the noise level $\delta$ and the noise behavior. A rough estimation based on two independent measurements is sufficient.
\item The newly introduced method is  as fast as the Morozov discrepancy principle (if one neglects constant factors).
\end{itemize}
Although the situation is not completely comparable with the case of deterministic $x$ which suffers from the mentioned logarithmic factor we think this is a significant advance to understand the difference in theoretical and actual behavior of the balancing principle.
Though it has not been shown in this paper, one can transfer parts of the proofs also to the case of Tikhonov regularization \cite{Bauer:2009}.

Furthermore, large numerical experiments show that the newly defined method works very well and can, in contrast to most other parameter choice regimes, cope with colored noise without any performance loss \cite{Bauer/Lukas:2010}. In these experiments it was observed that the factor $C$ in the oracle inequality is at most around $2$. The method is very stable, i.e., the number of observed outliers is very low, both for Tikhonov and Spectral-Cut-Off regularization.

Additionally it was observed that the stability increases if one uses more than two measurements in order to estimate the noise behavior and if one chooses $K$ a bit bigger than $1$.

\section*{Acknowledgements}

The author gratefully acknowledges the financial support by the Upper
Austrian Technology and Research Promotion.

\section*{References}
\bibliographystyle{amsalpha}
\bibliography{bibliography}

\end{document}